\title{Measure equivalence classification of right-angled Artin groups: the finite $\mathrm{Out}$ classes}
\author{Camille Horbez and Jingyin Huang}
\date{\today}
\begin{document}
\newtheorem{de}{Definition}[section]
\newtheorem{dec}[de]{Definition-Construction}
\newtheorem{theo}[de]{Theorem} 
\newtheorem{prop}[de]{Proposition}
\newtheorem{lemma}[de]{Lemma}
\newtheorem{cor}[de]{Corollary}
\newtheorem{propd}[de]{Proposition-Definition}
\newtheorem{conj}[de]{Conjecture}
\newtheorem{claim}[de]{Claim}
\newtheorem*{claim2}{Claim}
\newtheorem{obs}[de]{Observation}
\newtheorem*{problem}{Problem}

\newtheorem{theointro}{Theorem}
\newtheorem*{defintro}{Definition}
\newtheorem{corintro}[theointro]{Corollary}

\theoremstyle{remark}
\newtheorem{rk}[de]{Remark}
\newtheorem{ex}[de]{Example}
\newtheorem{question}[de]{Question}
\newtheorem{assumption}[de]{Setting}

\normalsize

\newcommand{\st}{\mathrm{st}}
\newcommand{\lk}{\mathrm{lk}}
\newcommand{\cala}{\mathcal{A}}
\newcommand{\calb}{\mathcal{B}}
\newcommand{\calh}{\mathcal{H}}
\newcommand{\calg}{\mathcal{G}}
\newcommand{\calk}{\mathcal{K}}
\newcommand{\calp}{\mathcal{P}}
\newcommand{\calz}{\mathcal{Z}}
\newcommand{\caln}{\mathcal{N}}
\newcommand{\Out}{\operatorname{Out}}
\newcommand{\Aut}{\mathrm{Aut}}
\newcommand{\dunion}{\sqcup}
\newcommand{\B}{\mathcal B}

\newcommand{\calf}{\mathcal{F}}
\newcommand{\calq}{\mathcal{Q}}
\newcommand{\calc}{\mathcal{C}}
\newcommand{\calu}{\mathcal{U}}

\newcommand{\actson}{\curvearrowright}

\makeatletter
\edef\@tempa#1#2{\def#1{\mathaccent\string"\noexpand\accentclass@#2 }}
\@tempa\rond{017}
\makeatother

\maketitle

\begin{abstract}
    Given a right-angled Artin group $G$ with finite outer automorphism group, we determine which right-angled Artin groups are measure equivalent (or orbit equivalent) to $G$. 
\end{abstract}

\section{Introduction}

Right-angled Artin groups (RAAGs) form a particularly important class of groups, with a simple definition. Starting from a finite simple graph $\Gamma$ (i.e.\ with no loop-edges and no multiple edges between vertices), the right-angled Artin group with defining graph $\Gamma$ has one generator per vertex of $\Gamma$, and the relations are given by imposing that any two generators corresponding to adjacent vertices commute. We recommend \cite{Cha} for a general survey about these groups.  

Right-angled Artin groups have played a prominent role in geometric group theory, notably due to their central place in the theory of special $\mathrm{CAT}(0)$ cube complexes \cite{HW}. A lot of work has revolved around their quasi-isometric classification, e.g.\ \cite{BN,BKS,BJN,Hua,Hua2,Mar,Oh}.

This paper, a sequel to our previous work \cite{HH}, is concerned with the measure equivalence classification of right-angled Artin groups, a measurable analogue to the problem of their quasi-isometry classification. Recall from Gromov's work \cite[{Definition~0.5.$\text{E}_1$}]{Gro} 
that two countable groups $G,H$ are \emph{measure equivalent} if there exists a standard measure space $\Omega$, equipped with commuting measure-preserving free actions of $G$ and $H$ by Borel automorphisms, each having a Borel fundamental domain of finite positive measure. There is a finer notion, called \emph{orbit equivalence}, where one further imposes that the two fundamental domains for the actions of $G$ and $H$ on $\Omega$ coincide. See  \cite{Sha-survey,Gab-survey,Fur-survey} for surveys around these notions. 

In \cite{HH}, we proved that two RAAGs with finite outer automorphism groups are measure equivalent if and only if they are isomorphic -- this matches the quasi-isometry classification \cite{Hua}. In the present paper, we drop the finite $\Out$ condition on one of the two groups: given a right-angled Artin group $G$ with $|\Out(G)|<+\infty$, we classify all RAAGs (possibly with infinite outer automorphism group) that are measure equivalent to $G$. This time, as we will see, the measure equivalence and quasi-isometry classification differ. We mention that the condition that $|\Out(G)|<+\infty$ is easily readable from the defining graph of $G$, as follows from works of Servatius and Laurence \cite{Ser,Lau} reviewed in Section~\ref{sec:raag}.

Starting with a right-angled Artin group $G$ with defining graph $\Gamma_G$, there are two simple ways to build another right-angled Artin group $H$ which is measure equivalent to $G$, namely:
\begin{enumerate}
    \item Consider any graph product of infinite finitely generated free abelian groups over $\Gamma_G$. This is orbit equivalent to $G$, as follows from the combination of
    \begin{itemize}
    \item[--] a theorem of Dye, stating that all groups $\mathbb{Z}^n$ with $n\ge 1$ are orbit equivalent \cite{Dye1,Dye2} (later extended by Ornstein--Weiss to all countably infinite amenable groups \cite{OW}), 
    \item[--] the fact that orbit equivalence passes from the vertex groups to the graph product (\cite[Proposition~4]{HH}, building on \cite[{$\mathbf{P}_{\mathrm{ME}}\mathbf{6}$}]{Gab}).
    \end{itemize}
    \item Consider a finite-index RAAG subgroup of $G$. When $|\Out(G)|<+\infty$, the set of all such subgroups has been fully described by the second-named author in \cite[Section~6]{Hua}. In particular, they can be algorithmically computed in terms of their defining graphs. Here is an example, see e.g.\ \cite[Example~1.4]{BKS}: if $v\in V\Gamma_G$ is a vertex, and if $\theta:G\to\mathbb{Z}/k\mathbb{Z}$ is the homomorphism sending $v$ to $1$ and every other vertex to $0$, then the kernel of $\theta$ is a right-angled Artin group (of index $k$ in $G$), whose defining graph is obtained by gluing $k$ copies of $\Gamma_G$ along the star of $v$.  
\end{enumerate}

These two phenomena can be combined to generate more examples. In the present paper, we prove that these account for the full classification of right-angled Artin groups that are measure equivalent to $G$, when $|\Out(G)|<+\infty$.

\begin{theointro}[Measure equivalence classification, see Theorem~\ref{theo:me-classification}]\label{theointro:main}
Let $G$ and $H$ be two right-angled Artin groups. Assume that $|\Out(G)|<+\infty$. 

Then $G$ and $H$ are measure equivalent if and only if there exists a finite-index right-angled Artin subgroup $G^0\subseteq G$ such that, denoting by $\Lambda$ the defining graph of $G^0$, the group $H$ is a graph product of infinite finitely generated free abelian groups over $\Lambda$.
\end{theointro}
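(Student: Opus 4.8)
The forward (``if'') implication is the short one. Measure equivalence is an equivalence relation, so it suffices to exhibit a chain of measure equivalences from $H$ to $G$. Let $G^0\subseteq G$ be a finite-index right-angled Artin subgroup with defining graph $\Lambda$, and suppose $H$ is a graph product of infinite finitely generated free abelian groups over $\Lambda$. First, $G^0$ is finite-index in $G$, hence commensurable to $G$, hence measure equivalent to $G$. Next, $G^0$ is exactly the graph product of copies of $\mathbb{Z}$ over $\Lambda$, while $H$ is a graph product of infinite finitely generated free abelian groups over the same graph $\Lambda$; since all such abelian groups are orbit equivalent to $\mathbb{Z}$ by the theorem of Dye (and Ornstein--Weiss), and since orbit equivalence passes from the vertex groups to the graph product by \cite[Proposition~4]{HH}, we get that $H$ is orbit equivalent to $G^0$. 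Composing, $H$ is measure equivalent to $G$.

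The reverse (``only if'') implication is where the work lies; assume $G$ and $H$ are measure equivalent. The first step is to normalize $H$. Every right-angled Artin group $H$ is canonically a graph product of free abelian groups: declare two vertices $v,w$ of $\Gamma_H$ equivalent when $\st(v)=\st(w)$. Each equivalence class is then a clique and contributes a free abelian factor $\mathbb{Z}^{n}$, and collapsing these classes to single vertices produces a graph $\Lambda_H$ (with no two distinct vertices sharing a closed star) together with an identification of $H$ with the graph product of the factors $\mathbb{Z}^{n}$ over $\Lambda_H$. Writing $\bar H$ for the right-angled Artin group on $\Lambda_H$ (the graph product of copies of $\mathbb{Z}$), the same Dye plus \cite[Proposition~4]{HH} argument as above shows that $H$ is orbit equivalent to $\bar H$, so $\bar H$ is measure equivalent to $G$. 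Thus the whole ``only if'' direction reduces to the claim: if $G$ and $\bar H$ are measure equivalent right-angled Artin groups with $|\Out(G)|<+\infty$ and $\Lambda_H$ has no two distinct vertices with the same closed star, then $\Lambda_H$ is isomorphic to the defining graph of a finite-index right-angled Artin subgroup of $G$.

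To prove this claim I would build, out of the coupling $\Omega$, a canonical measurable invariant of each side and match them. Concretely, the plan is to attach to the $G$-action and to the $\bar H$-action on $\Omega$ their associated measured groupoids, and to realize each as acting, through a measurable cocycle, on the canonical complex governing the RAAG structure -- the poset/graph of \emph{parabolic subgroups}, which for a finite-$\Out$ group is rigid in the sense that its automorphisms come from finitely many graph symmetries. The heart of the argument is a measured-groupoid superrigidity statement: because $|\Out(G)|<+\infty$, the $G$-side groupoid has no measurable flexibility beyond a finite ambiguity, so the coupling is forced to carry the parabolic structure of $G$ to that of $\bar H$ coherently and in an ``untwisted'' way. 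Transporting this identification across $\Omega$ yields a measure equivalence between $G$ and $\bar H$ respecting a canonical family of commensurated parabolic subgroups, which in turn produces an abstract commensuration between $G$ and $\bar H$ realized by right-angled Artin subgroups; equivalently, $\Lambda_H$ and $\Gamma_G$ become two members of a single commensurability pattern of RAAG-defining graphs.

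The final step is to recognize that this pattern is the one already classified by the second author: since $|\Out(G)|<+\infty$, the full list of defining graphs of finite-index right-angled Artin subgroups of $G$ is explicitly described in \cite[Section~6]{Hua}, and matching the commensuration produced above against that list identifies $\Lambda_H$ as one of them, giving the desired $G^0$ and completing the proof. I expect the genuine obstacle to be the superrigidity step of the previous paragraph: the group $\bar H$ may still have infinite outer automorphism group, since non-abelian transvections and partial conjugations survive the collapse of clique-equivalent vertices, so the rigidity of \cite{HH} does not apply verbatim and one must prove a new, \emph{relative} rigidity statement that controls the coupling only up to the abelianization of vertex groups already absorbed into the Dye reduction. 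Showing that this is the \emph{only} surviving flexibility -- that no exotic RAAG can be smuggled into the measure equivalence class of $G$ -- is the crux.
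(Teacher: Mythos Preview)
Your ``if'' direction and your reduction to the clique-reduced case are correct and match the paper. Beyond that, however, your proposal is an outline whose central step you yourself leave open: you invoke an unspecified ``measured-groupoid superrigidity'' that would transport parabolic structure across the coupling and produce an abstract commensuration to be matched against \cite[Section~6]{Hua}, but no such statement is formulated, and the obstacle you correctly identify --- that $\bar H$ may still have infinite $\Out$, so the rigidity of \cite{HH} does not apply --- is precisely where your argument stops.

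The paper's route is concrete and rather different. The governing ME invariant is the \emph{untransvectable extension graph} $\Gamma^{ue}$, already known from \cite[Corollary~9.4]{EH} to be preserved among clique-reduced RAAGs, giving $\Gamma_G^e\cong\Gamma_{\bar H}^{ue}$. This isomorphism alone is not enough (Example~\ref{ex:counterexample}), so the paper isolates two further conditions on $\bar H$ --- absence of untransvectable non-abelian equivalence classes, and every untransvectable vertex being \emph{strongly} untransvectable --- and proves each is an ME invariant via groupoid arguments (Propositions~\ref{prop:non-abelian-class} and~\ref{prop:strongly-untransvectable}). Both hold automatically for $G$ since $|\Out(G)|<+\infty$, hence also for $\bar H$. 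A purely combinatorial rigidity statement (Proposition~\ref{prop:rigidity}) then forces $\bar H$ to be transvection-free, whence $\Gamma_{\bar H}^{ue}=\Gamma_{\bar H}^e$ and \cite[Theorem~1.2]{Hua} yields the finite-index embedding directly; no coupling-level superrigidity is invoked.
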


This time, the measure equivalence and quasi-isometry classifications differ. Under the assumptions of Theorem~\ref{theointro:main}, the groups $G$ and $H$ are quasi-isometric if and only if $H$ has finite index in $G$ by \cite[Theorem~1.2]{Hua}.

We also reach an orbit equivalence classification theorem, whose statement is even simpler. As follows from \cite[Proposition~5.9]{EH}, a right-angled Artin group $G$ with $|\Out(G)|<+\infty$ is never orbit equivalent to any proper finite-index subgroup of $G$. Therefore we reach the following statement.

\begin{theointro}[Orbit equivalence classification, see Theorem~\ref{theo:oe-classification}]\label{theointro:oe}
Let $G$ and $H$ be two right-angled Artin groups. Assume that $|\Out(G)|<+\infty$. 

Then $G$ and $H$ are orbit equivalent if and only if $H$ is a graph product of infinite finitely generated free abelian groups over the defining graph of $G$.
\end{theointro}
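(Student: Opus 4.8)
The plan is to derive this orbit equivalence classification from the measure equivalence classification of Theorem~\ref{theointro:main} together with the orbit-equivalence rigidity of \cite[Proposition~5.9]{EH}, treating the two implications separately. For the ``if'' direction, suppose $H$ is a graph product of infinite finitely generated free abelian groups over the defining graph $\Gamma_G$ of $G$. Since $G$ is itself the graph product of copies of $\mathbb{Z}$ over $\Gamma_G$, it suffices to compare the two graph products vertex by vertex: every vertex group of $G$ is $\mathbb{Z}$, every vertex group of $H$ is some $\mathbb{Z}^{n_v}$ with $n_v\ge 1$, and all the groups $\mathbb{Z}^n$ with $n\ge 1$ are mutually orbit equivalent by Dye's theorem \cite{Dye1,Dye2}. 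As orbit equivalence passes from vertex groups to graph products over a common graph (\cite[Proposition~4]{HH}), I conclude that $G$ and $H$ are orbit equivalent. This is exactly the first of the two constructions described in the introduction.

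For the ``only if'' direction, assume $G$ and $H$ are orbit equivalent; in particular they are measure equivalent, so Theorem~\ref{theointro:main} provides a finite-index right-angled Artin subgroup $G^0\subseteq G$, with defining graph $\Lambda$, such that $H$ is a graph product of infinite finitely generated free abelian groups over $\Lambda$. Now $G^0$ is precisely the right-angled Artin group over $\Lambda$, so the argument of the previous paragraph, applied with $\Lambda$ in place of $\Gamma_G$, shows that $H$ is orbit equivalent to $G^0$.

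The final step is to upgrade this to the equality $G^0=G$. Since orbit equivalence is an equivalence relation on countable groups — transitivity being the composition of measure couplings of coupling index $1$, which again has index $1$ and hence is an orbit equivalence — chaining $G\sim_{\mathrm{OE}} H\sim_{\mathrm{OE}} G^0$ yields that $G$ is orbit equivalent to $G^0$. But $G^0$ is a finite-index right-angled Artin subgroup of $G$ and $|\Out(G)|<+\infty$, so \cite[Proposition~5.9]{EH} forbids $G$ from being orbit equivalent to a \emph{proper} finite-index subgroup of itself. Hence $G^0$ has index $1$, i.e.\ $G^0=G$ and $\Lambda=\Gamma_G$, which is the desired conclusion.

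I expect the genuine content to lie entirely in the two cited black boxes: the measure equivalence classification of Theorem~\ref{theointro:main}, which does all the structural heavy lifting, and the rigidity of \cite[Proposition~5.9]{EH}, which is the only place where orbit equivalence (rather than mere measure equivalence) is used and which is precisely what collapses the finite-index ambiguity. Given these, the remaining difficulty is purely formal: one must make sure that the transitivity of orbit equivalence is legitimately applied (hence the coupling-index bookkeeping above), and that $G^0$ is recognized as the honest right-angled Artin group over $\Lambda$ so that the ``if'' direction can be reused internally to produce the intermediate orbit equivalence $H\sim_{\mathrm{OE}} G^0$.
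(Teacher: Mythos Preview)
Your proposal is correct and follows essentially the same route as the paper: both derive the ``only if'' direction from Theorem~\ref{theointro:main} together with \cite[Proposition~5.9]{EH}, using that $H$ is orbit equivalent to $G^0$ to reduce to showing $G$ is not orbit equivalent to a proper finite-index subgroup. The only cosmetic difference is that the paper unpacks the invocation of \cite[Proposition~5.9]{EH} by composing the index-$1$ coupling (from $G\sim_{\mathrm{OE}}G^0$) with the index-$[G:G^0]$ coupling (from the inclusion) to produce a self-coupling of $G$ whose index must be $1$, whereas you cite the already-stated consequence from the introduction directly.
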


Let us now say a word about our proof strategy. From now on we will only consider RAAGs that are \emph{clique-reduced}, i.e.\ cannot be written as a graph product of free abelian groups where one of the vertex groups is isomorphic to $\mathbb{Z}^k$ for some $k\ge 2$. We do not lose any generality by doing so, since every RAAG is orbit equivalent to a clique-reduced one (using that a graph product of free abelian groups is orbit equivalent to the RAAG over the same graph). 

Kim and Koberda defined the \emph{extension graph} $\Gamma_G^e$ of $G$ as the graph whose vertices are the cyclic subgroups of $G$ associated to the vertices of its defining graph $\Gamma_G$, together with all their conjugates, where two cyclic subgroups are adjacent if they commute \cite{KK}. In our earlier work \cite[Theorem~2]{HH}, we proved that the extension graph is a measure equivalence invariant among \emph{transvection-free} RAAGs (which include those with $|\Out(G)|<+\infty$). In \cite[Corollary~9.4]{EH}, Escalier and the first-named author considered a variation over this graph, called the \emph{untransvectable extension graph}, and proved that it is a measure equivalence invariant among all clique-reduced RAAGs. It is the full subgraph of $\Gamma_G^e$ spanned by all \emph{untransvectable} parabolic subgroups, i.e.\ of the form $gG_vg^{-1}$ for some $g\in G$, and some $v\in V\Gamma_G$ such that there does not exist any vertex $w\neq v$ in $\Gamma_G$ with $\lk_{\Gamma_G}(v)\subseteq\st_{\Gamma_G}(w)$. The reader is refered to Section~\ref{sec:raag} for basic definitions and notations regarding parabolic subgroups. 

When $|\Out(G)|<+\infty$ and $H$ is clique-reduced, one could hope that an isomorphism between the untransvectable extension graphs of $G$ and $H$ forces $H$ to be a finite-index subgroup of $G$. Unfortunately this is not true in general, see Example~\ref{ex:counterexample}. But we prove that this is true under two extra assumptions regarding parabolic subgroups of $H$, namely (see Proposition~\ref{prop:rigidity}):

$\bullet$ the defining graph $\Gamma_H$ of $H$ does not contain any non-abelian untransvectable equivalence class (see Section~\ref{sec:raag} for definitions); 

$\bullet$ given a vertex $v\in V\Gamma_H$ with $H_v$ untransvectable, the common normalizer of all untransvectable cyclic parabolic subgroups that commute with $H_v$ is equal to $H_v$ itself.

These two conditions are automatically satisfied for $G$ with $|\Out(G)|<+\infty$, and we prove that they are both invariant under measure equivalence (Propositions~\ref{prop:non-abelian-class} and~\ref{prop:strongly-untransvectable}). So if a clique-reduced right-angled Artin group $H$ is measure equivalent to $G$, then it satisfies the above two conditions; from the isomorphism between the untransvectable extension graphs of $G$ and $H$ given by \cite{EH}, we then deduce that $H$ embeds as a finite-index subgroup of $G$, thereby proving our main theorem.

\paragraph{Acknowledgments.}

 We thank the referee for their prompt feedback and helpful comments on our paper.

The first-named author was funded by the European Union (ERC, Artin-Out-ME-OA, 101040507). Views and opinions expressed are however those of the authors only and do not necessarily reflect those of the European Union or the European Research Council. Neither the European Union nor the granting authority can be held responsible for them. The second-named author was funded by a Sloan fellowship.

This project was started at the Institut Henri Poincaré (UAR 839 CNRS-Sorbonne Université) during the trimester program \emph{Groups acting on fractals, Hyperbolicity and Self-similarity}. Both authors thank the IHP for its hospitality and support (through LabEx CARMIN, ANR-10-LABX-59-01).

\section{Background}\label{sec:background}

\subsection{Background on right-angled Artin groups}\label{sec:raag}

\paragraph{Right-angled Artin groups and graph products.} Let $\Gamma$ be a finite simple graph, i.e.\ $\Gamma$ has no loop-edge, and two distinct vertices are joined by at most one edge. We denote by $V\Gamma$ the vertex set of $\Gamma$. The \emph{right-angled Artin group} with defining graph $\Gamma$, denoted by $G_\Gamma$, is defined by the following presentation:
\[\langle V\Gamma \mid [v,w]=1 \text{~if~} v \text{~and~} w \text{~are joined by an edge} \rangle.\]
By a theorem of Droms \cite{Dro}, two right-angled Artin groups $G_\Gamma$ and $G_\Lambda$ are isomorphic if and only if the graphs $\Gamma$ and $\Lambda$ are isomorphic. Given a right-angled Artin group $G$, this will allow us to talk about the \emph{defining graph} of $G$, which will usually be denoted by  $\Gamma_G$.

Right-angled Artin groups belong to the more general class of \emph{graph products}, introduced by Green in \cite{Gre}. Given a finite simple graph $\Gamma$, and  a family $(G_v)_{v\in V\Gamma}$ of vertex groups, the graph product of the family $(G_v)$ over $\Gamma$ is the group obtained from the free product of the groups $G_v$, by adding as only extra relations that $G_v$ and $G_w$ commute whenever $v,w$ are adjacent in $\Gamma$. Right-angled Artin groups are graph products where all vertex groups are isomorphic to $\mathbb{Z}$.

\paragraph*{Parabolic subgroups.} Recall that a subgraph $\Lambda\subseteq\Gamma$ is a \textit{full subgraph} if any two vertices of $\Lambda$ are adjacent in $\Lambda$ if and only if they are adjacent in $\Gamma$. Each full subgraph $\Lambda\subseteq\Gamma$ gives rise to an embedding $G_{\Lambda}\hookrightarrow G_\Gamma$.
A \emph{parabolic subgroup} of $G_\Gamma$ is a subgroup $P$ which is conjugate to $G_{\Lambda}$ for some full subgraph $\Lambda\subseteq\Gamma$; in this case $\Lambda$ is unique and is called the \emph{type} of $P$ -- see e.g.\ \cite[Proposition~2.2]{CCV}. Intersections of parabolic subgroups of  $G_\Gamma$ are again parabolic subgroups \cite[Proposition~2.8]{DKR}. We say that a parabolic subgroup is \emph{standard} if it is equal (and not just conjugate) to $G_{\Lambda}$ for some full subgraph $\Lambda\subseteq\Gamma$.

The \emph{star} of a vertex $v$ in $\Gamma$, denoted by $\st_{\Gamma}(v)$, is the full subgraph spanned by $v$ and all the vertices that are adjacent to $v$. Its \emph{link} $\lk_{\Gamma}(v)$ is the full subgraph spanned by all the vertices that are adjacent to $v$. Given a full subgraph $\Lambda\subseteq \Gamma$, we let $\Lambda^{\perp}$ be the full subgraph of $\Gamma$ spanned by the vertices in $V\Gamma\setminus V\Lambda$ that are adjacent to all vertices of $\Lambda$. For example, for every vertex $v\in V\Gamma$, we have $\lk(v)=\{v\}^\perp$. 

By \cite{God} or \cite[Proposition~2.2]{CCV}, if $\Lambda\subseteq\Gamma$ is a full subgraph, then the normalizer of $G_{\Lambda}$ in $G_\Gamma$ is $G_{\Lambda}\times G_{\Lambda^{\perp}}$. Given a parabolic subgroup $P=gG_\Lambda g^{-1}$ of $G_\Gamma$, we let $P^{\perp}=gG_{\Lambda^{\perp}}g^{-1}$, which is well defined (i.e.\ it does not depend on the choice of $g$) in view of the description of normalizers. The normalizer of $P$ is then $P\times P^{\perp}$.

The center of a parabolic subgroup is a parabolic subgroup \cite[Proposition~2.2(1)]{CCV}: for an induced subgraph $\Lambda\subseteq\Gamma$, the center of $G_\Lambda$ is equal to $G_{\Theta}$, where $\Theta\subseteq\Lambda$ is the subgraph spanned by all vertices that are adjacent to all other vertices of $\Lambda$. 

A \emph{product parabolic subgroup} of $G_\Gamma$ is a parabolic subgroup that is conjugate to $G_\Lambda$ for some full subgraph $\Lambda\subseteq\Gamma$ that splits as a join of two non-empty subgraphs $\Lambda=\Lambda_1\circ\Lambda_2$ (i.e.\ every vertex of $\Lambda_1$ is adjacent to every vertex of $\Lambda_2$). In this case $G_\Lambda$ splits as a direct product $G_{\Lambda}=G_{\Lambda_1}\times G_{\Lambda_2}$. Every product parabolic subgroup is contained in one which is maximal for inclusion. In addition, a standard product parabolic subgroup is always contained in a maximal one which is also standard (this is a consequence of \cite[Proposition~2.2(2)]{CCV}). 

A parabolic subgroup that is not a product parabolic subgroup is called \emph{irreducible}. Every full subgraph $\Lambda$ of $\Gamma$ has a unique (up to permutation of the factors) join decomposition of the form $\Lambda=\Lambda_1\circ\dots\circ\Lambda_k$, where none of the subgraphs splits non-trivially as a join. Then $G_{\Lambda}=G_{\Lambda_1}\times\dots\times G_{\Lambda_k}$. The subgroups $G_{\Lambda_i}$ are called the \emph{irreducible factors} of $G_{\Lambda}$. More generally, if $P=gG_{\Lambda}g^{-1}$, then the subgroups $gG_{\Lambda_i}g^{-1}$ are called the \emph{irreducible factors} of $P$ (that these are well-defined, i.e.\ they do not depend on the choice of $g$, follows from the description of normalizers in $G_\Gamma$).  

\paragraph*{Collapsible parabolic subgroups.} We borrow the following definition from \cite[Definition~6.1 and Section~9]{EH}.

\begin{de}[Collapsible, clique-reduced]\label{de:collapsible}
Let $\Gamma$ be a finite simple graph. A full subgraph $\Lambda\subseteq\Gamma$ is \emph{collapsible} if for any two vertices $x,y\in V\Lambda$, one has $\st_{\Gamma}(x)\cap (\Gamma\setminus\Lambda)=\st_{\Gamma}(y)\cap(\Gamma\setminus\Lambda)$. A parabolic subgroup of $G_{\Gamma}$ is \emph{collapsible} if its type is collapsible.

We say that $\Gamma$ is \emph{clique-reduced} if it does not contain any collapsible complete subgraph on at least two vertices. We also say that a right-angled Artin group is \emph{clique-reduced} if its defining graph is clique-reduced.
\end{de}

\begin{rk}\label{rk:clique-reduced}
Collapsibility is interpreted as follows. Assume that $\Lambda$ is collapsible, and let $\bar\Gamma$ be the graph obtained from $\Gamma$ by collapsing $\Lambda$ to a single vertex $v_\Lambda$, and joining $v_{\Lambda}$ by an edge to every vertex $w\in V\Gamma\setminus V\Lambda$ that was adjacent to some (equivalently, any) vertex of $\Lambda$. Then $G_{\Gamma}$ is isomorphic to the graph product over the graph $\bar\Gamma$, where the vertex group of $v_{\Lambda}$ is $G_{\Lambda}$, and all other vertex groups are $\mathbb{Z}$.  

In particular, every right-angled Artin group can be written as a graph product of free abelian groups over a clique-reduced graph.
\end{rk}

The following lemma gives other characterizations of collapsibility. In the sequel, we will only use the equivalence of the first two assertions. The equivalence with (3) will be proved in the more general groupoid-theoretic setting in Lemma~\ref{lemma:coll}; working out the group-theoretic version first serves as a warm-up.

\begin{lemma}\label{lemma:collapsible}
Let $G$ be a right-angled Artin group, and let $P\subseteq G$ be a parabolic subgroup. The following statements are equivalent.
\begin{itemize}
    \item[(1)] The parabolic group $P$ is collapsible.
    \item[(2)] For every infinite parabolic subgroup $B\subseteq P$, one has $B\times B^{\perp}\subseteq P\times P^{\perp}$.
    \item[(3)] For every infinite subgroup $B\subseteq P$, the normalizer of $B$ is contained in the normalizer of $P$.
\end{itemize}
\end{lemma}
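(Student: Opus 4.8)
The plan is to reduce everything to the case of a standard parabolic and then translate the three conditions into the combinatorics of $\Gamma$. All three statements are invariant under conjugation (the normalizer, the operation $Q\mapsto Q^{\perp}$, and the type transform equivariantly), so I may assume $P=G_\Lambda$ for a full subgraph $\Lambda\subseteq\Gamma$. Then $P\times P^{\perp}=G_\Lambda\times G_{\Lambda^{\perp}}=G_{\Lambda\circ\Lambda^{\perp}}$ is the standard parabolic spanned by $V\Lambda\cup V\Lambda^{\perp}$. Recall that collapsibility of $\Lambda$ says exactly that every vertex of $\Gamma\setminus\Lambda$ adjacent to some vertex of $\Lambda$ is adjacent to all of them; equivalently, each vertex outside $\Lambda$ either lies in $\Lambda^{\perp}$ or has no neighbor in $\Lambda$.

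For $(1)\Rightarrow(2)$, let $B\subseteq G_\Lambda$ be an infinite parabolic subgroup. Using that intersections of parabolics are parabolic \cite[Proposition~2.8]{DKR}, $B=B\cap G_\Lambda$ is a parabolic subgroup of the sub-RAAG $G_\Lambda$, so I can write $B=gG_\nu g^{-1}$ with $g\in G_\Lambda$ and $\nu\subseteq\Lambda$ a full subgraph that is non-empty because $B$ is infinite; then $B^{\perp}=gG_{\nu^{\perp}}g^{-1}$. Since $g\in G_\Lambda\subseteq P\times P^{\perp}$, it suffices to show $G_{\nu^{\perp}}\subseteq P\times P^{\perp}$, i.e.\ that every $w\in V\nu^{\perp}$ lies in $V\Lambda\cup V\Lambda^{\perp}$. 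If $w\in V\Lambda$ this is clear; otherwise pick a vertex $x\in V\nu\subseteq V\Lambda$, and observe that $w$ is adjacent to $x$ while $w\notin V\Lambda$, so $w\in\st_{\Gamma}(x)\cap(\Gamma\setminus\Lambda)$. Collapsibility then forces $w$ to be adjacent to every vertex of $\Lambda$, i.e.\ $w\in V\Lambda^{\perp}$. Hence $B^{\perp}\subseteq P\times P^{\perp}$, and as $B\subseteq P$ we conclude $B\times B^{\perp}\subseteq P\times P^{\perp}$.

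For $(2)\Rightarrow(1)$ I argue by contraposition: if $\Lambda$ is not collapsible, there are $x,y\in V\Lambda$ and $w\in V\Gamma\setminus V\Lambda$ with $w$ adjacent to $x$ but not to $y$. Taking the infinite cyclic parabolic $B=G_{\{x\}}\subseteq P$, we get $B^{\perp}=G_{\lk_{\Gamma}(x)}\ni w$; but $w\notin V\Lambda$ and $w\notin V\Lambda^{\perp}$ (as $w$ is not adjacent to $y$), so the generator $w$ does not belong to the standard parabolic $P\times P^{\perp}=G_{\Lambda\circ\Lambda^{\perp}}$, witnessing the failure of $(2)$. Finally, $(3)\Rightarrow(2)$ is immediate since for a parabolic $B$ one has $N_G(B)=B\times B^{\perp}$ and $N_G(P)=P\times P^{\perp}$; and $(2)\Rightarrow(3)$ follows by passing to parabolic closures: for an arbitrary infinite $B\subseteq P$, let $Q$ be the smallest parabolic containing $B$ (well defined by the intersection property, and contained in $P$). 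Any element normalizing $B$ carries $Q$ to another minimal parabolic over $B$, hence preserves $Q$, so $N_G(B)\subseteq N_G(Q)=Q\times Q^{\perp}$; applying $(2)$ to the infinite parabolic $Q\subseteq P$ gives $Q\times Q^{\perp}\subseteq P\times P^{\perp}=N_G(P)$. This direct argument is the group-theoretic shadow of the groupoid version recorded in Lemma~\ref{lemma:coll}.

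The main obstacle I anticipate is the bookkeeping in $(1)\Rightarrow(2)$: one must ensure that an infinite parabolic subgroup sitting inside $G_\Lambda$ can be conjugated into standard form by an element \emph{of $G_\Lambda$ itself}, so that its perp is computed correctly relative to $\Gamma$, and that the single-vertex membership test ``$w\in G_{\Lambda\circ\Lambda^{\perp}}\iff w\in V\Lambda\cup V\Lambda^{\perp}$'' is legitimate; both rest on the standard retractions onto standard parabolics and on the normalizer description \cite[Proposition~2.2]{CCV}. Once these are in place, the remaining content is a direct dictionary between group inclusions and adjacency in $\Gamma$.
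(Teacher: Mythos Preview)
Your proof is correct and follows essentially the same route as the paper: reduce to the standard case $P=G_\Lambda$, handle $(1)\Leftrightarrow(2)$ by the vertex-level argument (using \cite[Proposition~2.2(2)]{CCV} to conjugate $B$ into standard form within $G_\Lambda$, and a single cyclic witness for the contrapositive), and deduce $(2)\Rightarrow(3)$ via the parabolic closure. The only cosmetic wrinkle is that your invocation of \cite{DKR} for ``$B=B\cap G_\Lambda$ is parabolic in $G_\Lambda$'' is not quite the right citation---what you actually need (and correctly flag in your final paragraph) is exactly \cite[Proposition~2.2(2)]{CCV}, which the paper cites directly.
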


\begin{proof}
Up to conjugation, we will assume that $P=G_\Lambda$ for some full subgraph $\Lambda\subseteq\Gamma_G$.

We first prove that $(1)\Rightarrow (2)$, so assume that $P$ (equivalently $\Lambda$) is collapsible, and let $B\subseteq P$ be an infinite parabolic subgroup. Up to a conjugation by an element in $P$, we can assume that $B=G_{\Theta}$ for some full subgraph $\Theta\subseteq\Lambda$, see \cite[Proposition~2.2(2)]{CCV}. Collapsibility of $\Lambda$ implies that $\Theta^{\perp}\subseteq\Lambda\circ\Lambda^{\perp}$, so $B\times B^{\perp}\subseteq P\times P^{\perp}$, i.e.\ (2) holds.

We now prove that $\neg (1)\Rightarrow \neg (2)$, so assume that $P$ (equivalently $\Lambda$) is not collapsible. Then $\Lambda$ contains a vertex $x$ such that $\lk_{\Gamma}(x)\nsubseteq \Lambda\circ \Lambda^{\perp}$. Letting $B$ be the cyclic parabolic subgroup associated to $x$, we thus have $B\times B^{\perp}\nsubseteq P\times P^{\perp}$, showing that (2) fails. 

The implication $(3)\Rightarrow (2)$ is clear, because $B\times B^{\perp}$ is the normalizer of $B$ when $B$ is a parabolic subgroup. For $(2)\Rightarrow (3)$, start with an infinite subgroup $B\subseteq P$. Let $\hat B$ be the intersection of all parabolic subgroups that contain $B$. In particular $\hat B$ is a parabolic subgroup contained in $P$. By (2) we have $\hat B\times \hat B^{\perp}\subseteq P\times P^{\perp}$. Since the normalizer of $B$ is contained in the normalizer of $\hat B$ (i.e.\ $\hat B\times \hat B^{\perp}$), Assertion~(3) follows.
\end{proof}

\paragraph*{Automorphisms of right-angled Artin groups.} Consider the standard generating set of $G_\Gamma$ given by $V\Gamma$. By work of Laurence \cite{Lau}, confirming a conjecture of Servatius \cite{Ser}, the group $\Out(G_\Gamma)$ is generated by the outer classes of the following automorphisms:
\begin{itemize}
     \item \emph{graph automorphisms}, i.e.\ permutations of the generating set $V\Gamma$ according to an automorphism of $\Gamma$;
    \item \emph{inversions}, i.e.\ sending one generator $s$ to $s^{-1}$, keeping all other generators fixed;
    \item \emph{transvections}: given two distinct generators $s_1,s_2$ associated with vertices $v_1,v_2\in V\Gamma$ with $\lk_{\Gamma}(v_1)\subseteq\st_{\Gamma}(v_2)$, send $s_1$ to $s_1s_2$, keeping all other generators fixed;
    \item \emph{partial conjugations}: given a generator $s$ associated with a vertex $v\in V\Gamma$ whose star disconnects $\Gamma$, and given a connected component $C$ of $\Gamma\setminus\st_{\Gamma}(v)$, conjugate all generators associated with vertices in $C$ by $s$, and keep all other generators fixed.
\end{itemize}
The first two types of automorphisms generate a finite subgroup of $\Aut(G_\Gamma)$. Thus $|\Out(G_\Gamma)|<+\infty$ if and only if there are no transvections and no partial conjugations. We say that $G_\Gamma$ is \emph{transvection-free} if it has no transvections.

\label{equivalence-classes}In \cite[Section~2]{CV}, Charney and Vogtmann introduced the following preorder on $V\Gamma$: say that $v\le w$ if $\lk_\Gamma(v)\subseteq\st_{\Gamma}(w)$. Say that $v$ and $w$ are \emph{equivalent}, denoted by $v\sim w$, if $v\le w$ and $w\le v$. Equivalence classes have two possible forms: either all vertices in the class are pairwise adjacent, or no two vertices in the class are adjacent \cite[Lemma~2.3]{CV}. Equivalence classes are called \emph{abelian} or \emph{non-abelian}, respectively (the corresponding parabolic subgroups are free abelian or non-abelian free, respectively). We will write $v<w$ to mean $v\le w$ and $v\not\sim w$. Observe that if $\Theta$ is non-abelian equivalence class of vertices, then for every $v\in \Theta$, we have $\Theta^{\perp}=\lk_{\Gamma}(v)$.

We say that a vertex $v\in V\Gamma$ is \emph{transvectable} if there exists a vertex $w\in V\Gamma$ distinct from $v$ such that $\lk_{\Gamma}(v)\subseteq\st_\Gamma(w)$, and \emph{untransvectable} otherwise. More generally, we will say that a full subgraph $\Lambda\subseteq\Gamma$ is \emph{transvectable} if there exists a vertex $w\in V\Gamma\setminus V\Lambda$ such that for every $v\in V\Lambda$, one has $\lk_{\Gamma}(v)\subseteq\st_{\Gamma}(w)$ (otherwise it is \emph{untransvectable}). A parabolic subgroup of $G_\Gamma$ is \emph{transvectable} (or \emph{untransvectable}) if its type is. 

In other words, a vertex is untransvectable if it is maximal for the preorder $\le$, and its equivalence class is reduced to itself. Notice also that an equivalence class of vertices (for $\sim$) is untransvectable if and only if it is maximal for the induced partial order on equivalence classes of vertices. 

\begin{rk}\label{rk:na}
 A parabolic subgroup $P$ of $G_\Gamma$ is conjugate to $G_{\Lambda}$ for some untransvectable non-abelian equivalence class $\Lambda\subseteq\Gamma$ if and only if it is collapsible, non-abelian free, and untransvectable.
\end{rk}

\subsection{Background on measured groupoids}\label{sec:groupoids}

In this section, we review the language of measured groupoids, in which the proof of our measure equivalence classification theorem is phrased. General references of relevance for the present paper include \cite[Section~2.1]{AD} or \cite{Kid-survey}; we will follow the terminology and presentation from \cite[Section~3]{GH} or \cite[Section~4]{EH} very closely. The relevance of measured groupoids for studying measure equivalence is explained in the final paragraph of this section.

\paragraph{Generalities on measured groupoids.} Let $X$ be a standard Borel space, i.e.\ $X$ is the Borel space associated to a Polish (i.e.\ separable and completely metrizable) topological space. A \emph{discrete Borel groupoid} over $X$ is a standard Borel space $\calg$ (thought of as a space of arrows) that is equipped with two Borel maps $s,r:\calg\to X$ (giving the source and range of the arrows) whose fibers are at most countable, with a measurable composition law $(\mathsf{g},\mathsf{h})\mapsto \mathsf{gh}$ defined on the set of pairs $(\mathsf{g},\mathsf{h})$ such that $s(\mathsf{g})=r(\mathsf{h})$, a measurable inverse map $\mathsf{g}\mapsto\mathsf{g}^{-1}$ (with $s(\mathsf{g}^{-1})=r(\mathsf{g})$ and $r(\mathsf{g}^{-1})=s(\mathsf{g})$), and a unit element $\mathsf{e}_x$ for every $x\in X$ (with $\mathsf{g}\mathsf{g}^{-1}=\mathsf{e}_{r(\mathsf{g})}$ and $\mathsf{g}^{-1}\mathsf{g}=\mathsf{e}_{s(\mathsf{g})}$ for every $\mathsf{g}\in\mathcal{G}$).

A discrete Borel groupoid is \emph{trivial} if all its elements are units.

A \emph{bisection} of $\calg$ is a Borel subset $B\subseteq\calg$ such that $s_{|B}$ and $r_{|B}$ are injective. By a theorem of Lusin and Novikov (see e.g.\ \cite[Theorem~18.10]{Kec}), every discrete Borel groupoid is covered by countably many bisections.

When $X$ is equipped with a Borel probability measure $\mu$, we say that $\calg$ is a \emph{measured groupoid} over $(X,\mu)$ if additionally, the measure $\mu$ is quasi-invariant, i.e.\ for every bisection $B\subseteq\calg$, one has $\mu(r(B))=0$ if and only if $\mu(s(B))=0$. From now on $\calg$ will always be a measured groupoid over $X$.

A \emph{measured subgroupoid} of $\calg$ is a measurable subset $\calh\subseteq\calg$ such that for every $\mathsf{h}_1,\mathsf{h}_2\in\calh$ with $s(\mathsf{h}_1)=r(\mathsf{h}_2)$, one has $\mathsf{h}_1\mathsf{h}_2\in\calh$.

Given a measurable subset $U\subseteq X$, the \emph{restricted groupoid} $\calg_{|U}$ is the measured groupoid over $U$ consisting of all elements $\mathsf{g}\in\calg$ with $s(\mathsf{g}),r(\mathsf{g})\in U$.

The following is an important example of a measured groupoid. Let $X$ be a standard probability space, equipped with a measure-preserving action of a countable group $G$. Then $G\times X$ has the structure of a measured groupoid over $X$, where the source and range maps are given by $s(g,x)=x$ and $r(g,x)=gx$, the composition law is given by $(g,hx)(h,x)=(gh,x)$, the inverse map is given by $(g,x)^{-1}=(g^{-1},gx)$, and the units are $\mathsf{e}_x=(e,x)$. This measured groupoid is denoted by $G\ltimes X$.

Given a measured subgroupoid $\calh\subseteq\calg$, consider the equivalence relation $\sim_{\calh}$ on $\calg$ defined by $\mathsf{g}_1\sim_{\calh}\mathsf{g}_2$ if and only if $s(\mathsf{g}_1)=s(\mathsf{g}_2)$ and $\mathsf{g}_2\mathsf{g}_1^{-1}\in\calh$. We say that $\calh$ has \emph{finite index} in $\calg$ if for a.e.\ $x\in X$, there are only finitely many classes for $\sim_{\calh}$ on $s^{-1}(x)$.

A measured groupoid $\calg$ is of \emph{infinite type} if for every positive measure Borel subset $U\subseteq X$ and a.e.\ $x\in U$, there are infinitely many elements $\mathsf{g}\in\calg_{|U}$ with $s(\mathsf{g})=x$.

\paragraph{Cocycles.} Let $\calg$ be a measured groupoid over a standard probability space $X$, and $G$ be a countable group. A measurable map $\rho:\calg\to G$ is a \emph{strict cocycle} if for all $\mathsf{g},\mathsf{h}\in\calg$ with $s(\mathsf{g})=r(\mathsf{h})$, one has $\rho(\mathsf{gh})=\rho(\mathsf{g})\rho(\mathsf{h})$. 

A strict cocycle $\rho:\calg\to G$ \emph{has trivial kernel} if the only elements $\mathsf{g}\in\calg$ satisfying $\rho(\mathsf{g})=1$ are the units $\mathsf{e}_x$, with $x\in X$.

As an important example, if $\calg=G\ltimes X$ is the measured groupoid associated to a measure-preserving group action on a standard probability space $X$, then the map $(g,x)\mapsto g$ is a strict cocycle $\calg\to G$ with trivial kernel.

\paragraph*{Normalization.} Let $\calg$ be a measured groupoid over a standard probability space $X$, and let $\cala,\caln$ be two measured subgroupoids. 

Given a bisection $B\subseteq\calg$ with $U=s(B)$ and $V=r(B)$, we say that $\cala$ is \emph{$B$-invariant} if there exist conull Borel subsets $U^*\subseteq U$ and $V^*\subseteq V$ such that $B\cala_{|U^*}B^{-1}=\cala_{|V^*}$ (here $B\cala_{|U^*}B^{-1}$ is the set of all elements of the form $\mathsf{g}_1\mathsf{g}_2\mathsf{g}_3$ with $\mathsf{g}_1\in B$, $\mathsf{g}_2\in\cala_{|U^*}$ and $\mathsf{g}_3^{-1}\in B$). 

We say that $\cala$ is \emph{normalized} by $\caln$ if $\caln$ can be covered by countably many bisections $B_n$ (with $n\in\mathbb{N}$) in such a way that $\cala$ is $B_n$-invariant for every $n\in\mathbb{N}$.

We say that $\cala$ is \emph{stably normalized} by $\caln$ if there exists a partition $X^*=\dunion_{i\in I}X_i$ of a conull Borel subset $X^*\subseteq X$ into at most countably many Borel subsets, such that for every $i\in I$, the groupoid $\cala_{|X_i}$ is normalized by $\caln_{|X_i}$.

\begin{ex}\label{ex:normal}
Let $\calg$ be a measured groupoid over a standard probability space, let $G$ be a countable group, and let $\rho:\calg\to G$ be a strict cocycle. Let $A,N\subseteq G$ be subgroups such that $A$ is normalized by $N$. Then $\rho^{-1}(A)$ is normalized by $\rho^{-1}(N)$.
\end{ex}

\paragraph*{Amenability.} There is a notion of amenability of a measured groupoid which comes from the work of Zimmer \cite{Zim2}. When $\calg=G\ltimes X$ is the measured groupoid associated to an ergodic measure-preserving action of a countable group $G$ on a standard probability space $X$, the amenability of $\calg$ is equivalent to the amenability of the $G$-action on $X$ in the sense of Zimmer \cite[Definition~1.4]{Zim2}. Since we will never work with the actual definition of amenability of a measured groupoid (or a group action) in the present work, we omit it, and refer for instance to \cite[Definition~3.33]{GH}. All facts that we will need about amenable measured groupoids are collected in \cite[Section~3.3]{GH}. In particular, if $\rho:\calg\to G$ is a strict cocycle with trivial kernel towards a countable group $G$, and if $G$ is amenable, then $\calg$ is amenable, see e.g.\ \cite[Corollary~3.39]{GH}.
We say that a measured groupoid over a standard probability space $X$ is \emph{everywhere non-amenable} if for every positive measure Borel subset $U\subseteq X$, the groupoid $\calg_{|U}$ is non-amenable.

\paragraph*{Action-like cocycles.}

Following \cite[Definition~4.9]{EH}, a Borel map $\rho:\calg\to G$ is an \emph{action-like cocycle} if it is a strict cocycle and
\begin{enumerate}
\item $\rho$ has trivial kernel, i.e.\ $\rho(\mathsf{g})=1$ if and only if $\mathsf{g}=\mathsf{e}_x$ for some $x\in X$; 
\item whenever $H_1\subseteq H_2$ is an infinite-index inclusion of subgroups of $G$, for every positive measure Borel subset $U\subseteq X$, the measured subgroupoid $\rho^{-1}(H_1)_{|U}$ does not have finite index in $\rho^{-1}(H_2)_{|U}$;
\item for every non-amenable subgroup $H\subseteq G$, the measured subgroupoid $\rho^{-1}(H)$ is everywhere non-amenable.
\end{enumerate}

As a matter of fact \cite[Lemma~4.12]{EH}, which justifies the terminology, if $\calg=G\ltimes X$ is the measured groupoid associated to a measure-preserving action of a countable group $G$ on a standard probability space $X$, then the strict cocycle $(g,x)\mapsto g$ is action-like. We will also use the fact that if $\rho:\calg\to G$ is an action-like cocycle, and if $U\subseteq X$ is a positive measure Borel subset, then the restriction $\rho:\calg_{|U}\to G$ is again action-like.

\paragraph*{Cocycles towards right-angled Artin groups.}  This paragraph is specifically about the case where $\calg$ is equipped with a strict cocycle towards a right-angled Artin group $G$. Following \cite[Definition~3.6]{HH}, given a parabolic subgroup $P\subseteq G$, and a subgroupoid $\calp$ of $\calg$, we say that $(\calp,\rho)$ is \emph{tightly $P$-supported} if the following two hold:
\begin{itemize}
    \item there exists a conull Borel subset $X^*\subseteq X$ such that $\rho(\calp_{|X^*})\subseteq P$;
    \item for every positive measure Borel subset $U\subseteq X$, and every proper parabolic subgroup $Q\subsetneq P$, we have $\rho(\calp_{|U})\nsubseteq Q$. 
\end{itemize}
By \cite[Lemma~3.7]{HH},  there exists a partition $X=\sqcup_{i\in I} X_i$ into at most countably many Borel subsets
such that for every $i\in I$, there exists a parabolic subgroup $P_i$ such that $(\calp_{|X_i},\rho)$ is tightly $P_i$-supported. And by \cite[Lemma~3.8]{HH}, if $(\calp,\rho)$ is tightly $P$-supported, and if $\caln\subseteq\calg$ is a measured subgroupoid that normalizes $\calp$, then there exists a conull Borel subset $X^*\subseteq X$ such that $\rho(\caln_{|X^*})\subseteq P\times P^{\perp}$. 

\paragraph*{Relevance towards measure equivalence.}  Let $\Omega$ be a measure equivalence coupling between two countable groups $G$ and $H$, i.e.\ $\Omega$ is a standard measured space equipped with commuting measure-preserving free actions of $G$ and $H$ by Borel automorphisms, each having a Borel fundamental domain of finite positive measure. We can choose fundamental domains $X_G,X_H\subseteq\Omega$ for the respective actions of $G$ and $H$ on $\Omega$, such that the $(G\times H)$-translates of $U:=X_G\cap X_H$ cover $\Omega$ up to null sets (see \cite[Lemma~2.27]{Kid-survey}). The identification $X_G\simeq \Omega/G$ yields a measure-preserving $H$-action on $X_G$; likewise, we have a measure-preserving action of $G$ on $X_H$. The groupoids $G\ltimes X_H$ and $H\ltimes X_G$ have isomorphic restrictions to $U$; we denote by $\calg$ this restriction. Then $\calg$ is a measured groupoid over a standard finite measure space $U$ which comes with two action-like cocycles, one towards $G$ and one towards $H$.

\section{Non-abelian equivalence classes and measure equivalence}\label{sec:invariant}

The goal of the present section is to prove the following proposition. We refer to Definition~\ref{de:collapsible} for the notion of a clique-reduced right-angled Artin group; equivalence classes of vertices are understood in the sense of Charney--Vogtmann, as recalled on page~\pageref{equivalence-classes}.

\begin{prop}\label{prop:non-abelian-class}
Let $G$ and $H$ be two clique-reduced right-angled Artin groups that are measure equivalent. Then $\Gamma_G$ contains an untransvectable non-abelian equivalence class if and only if $\Gamma_H$ does.
\end{prop}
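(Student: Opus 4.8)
The plan is to isolate a property of the coupling groupoid $\calg$ that is manifestly intrinsic to $\calg$ — and therefore symmetric in $G$ and $H$ — and that detects the presence of an untransvectable non-abelian equivalence class. As recalled at the end of Section~\ref{sec:groupoids}, a measure equivalence between $G$ and $H$ produces a measured groupoid $\calg$ over a finite measure space $U$ carrying two action-like cocycles $\rho_G\colon\calg\to G$ and $\rho_H\colon\calg\to H$. By Remark~\ref{rk:na}, $\Gamma_G$ has an untransvectable non-abelian equivalence class if and only if $G$ has a parabolic subgroup $P$ that is at once collapsible, non-abelian free, and untransvectable. So the entire task is to translate these three adjectives into groupoid data attached to $\calg$, and then read them back through the other cocycle.

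For the forward direction I would start from such a $P\subseteq G$ and attach to it the subgroupoid $\calp=\rho_G^{-1}(P)$, recording four groupoid signatures. (i) $\calp$ is everywhere non-amenable: since $P$ is non-abelian free it is non-amenable, so this is immediate from axiom~(3) of action-like cocycles. (ii) $\calp$ is \emph{freeness-detecting}: as $P$ contains no copy of $\mathbb{Z}^2$, the groupoid $\calp$ contains no two commuting measured subgroupoids whose product is of infinite type. (iii) $\calp$ is \emph{self-normalizing}: for every infinite-type subgroupoid $\calb\subseteq\calp$, any subgroupoid that normalizes $\calb$ also normalizes $\calp$; this is exactly where collapsibility enters, through the equivalence $(1)\Leftrightarrow(3)$ of Lemma~\ref{lemma:collapsible} combined with \cite[Lemma~3.8]{HH} and Example~\ref{ex:normal} (collapsibility of $P$ forces $N(B)\subseteq N(P)$ for every infinite $B\subseteq P$, and $N(P)=P\times P^\perp$ is precisely the normalizer of $\calp$). (iv) $\calp$ is \emph{non-dominated}: writing $\calp^{\perp}=\rho_G^{-1}(P^{\perp})$ for the perp-subgroupoid and using $P^{\perp}=G_{\lk(v)}$, there is no amenable cyclic-type subgroupoid that centralizes $\calp^{\perp}$ without being absorbed into $\rho_G^{-1}(P\times P^{\perp})$; this encodes untransvectability of the class, i.e.\ the absence of a vertex dominating it.

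For the backward direction — and, run through $\rho_H$, for the transfer to $H$ — I would take any subgroupoid $\calp\subseteq\calg$ witnessing (i)--(iv) and apply the tight-support decomposition \cite[Lemma~3.7]{HH}, restricting to a positive-measure piece on which $(\calp,\rho_H)$ is tightly $Q$-supported for a single parabolic $Q\subseteq H$. Then I read the four signatures back: everywhere non-amenability forces $Q$ non-amenable (otherwise $\rho_H^{-1}(Q)$, and hence $\calp$, would be amenable); the freeness-detecting property forces $Q$ to contain no $\mathbb{Z}^2$, so $Q$ is non-abelian free (here the clique-reduced hypothesis on $H$ ensures that a non-amenable collapsible parabolic with no $\mathbb{Z}^2$ is genuinely free, i.e.\ that its type is a discrete graph); self-normalization together with \cite[Lemma~3.8]{HH} and $(3)\Rightarrow(1)$ of Lemma~\ref{lemma:collapsible} forces $Q$ collapsible; and the non-domination signature forces $Q$ untransvectable. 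By Remark~\ref{rk:na}, $Q$ is then conjugate to $G_{\Lambda'}$ for an untransvectable non-abelian equivalence class $\Lambda'\subseteq\Gamma_H$. Since (i)--(iv) refer only to $\calg$, exchanging the roles of $G$ and $H$ gives the converse, and the proposition follows.

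The hard part, I expect, is making signatures (ii) and (iv) genuinely cocycle-independent and verifying that they transfer in both directions. Ruling out $\mathbb{Z}^2$ must be shown to survive tight support and to reflect exactly the discreteness of the type of $Q$, so that $Q$ is \emph{free} and not merely an irreducible non-amenable parabolic such as the one supported on a path $a-b-c$, which satisfies collapsibility and untransvectability but is not free; distinguishing these is precisely what signature~(ii) is for. Signature~(iv) is more delicate still, because untransvectability is maximality for the domination preorder rather than for inclusion, so a dominating vertex $w$ does not enlarge $P$ but instead produces a cyclic parabolic $\langle w\rangle$ whose centralizer swallows $P^{\perp}$; I must phrase this through the perp-subgroupoid and confirm, using the infinite-index axiom~(2) of action-like cocycles, that such a $w$ on the $H$-side would manifest as a forbidden centralizing cyclic subgroupoid, contradicting~(iv). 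The remaining work is bookkeeping: upgrading the a priori only $U'$-local witnesses to genuine statements about $\calg$, and matching the various tight-support partitions on the $G$- and $H$-sides.
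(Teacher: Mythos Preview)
Your overall strategy --- encode ``collapsible, non-abelian free, untransvectable'' as intrinsic properties of a subgroupoid $\calp$ and read them back through $\rho_H$ --- is natural, but there are two genuine gaps that prevent it from going through as stated.

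\textbf{First, tight support gives only an inclusion, not an equality.} When you restrict so that $(\calp,\rho_H)$ is tightly $Q$-supported, you obtain $\rho_H(\calp_{|U'})\subseteq Q$, not $\calp_{|U'}=\rho_H^{-1}(Q)_{|U'}$. But the groupoid characterisation of collapsibility (Lemma~\ref{lemma:coll}, which is what underlies your signature~(iii)) is a statement about $\rho_H^{-1}(Q)$, not about an arbitrary subgroupoid whose image lands in $Q$. So from the fact that \emph{$\calp$} satisfies your self-normalisation property you cannot conclude that $Q$ is collapsible; you would first need to know that $\calp$ actually equals $\rho_H^{-1}(Q)$ on a positive-measure piece. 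The same issue bites signature~(ii): Lemma~\ref{lemma:thin} characterises when a parabolic is a free product of free abelian groups in terms of $\rho_H^{-1}(Q)$, not in terms of a subgroupoid merely supported in $Q$.

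\textbf{Second, signature~(iv) is not intrinsic.} You define $\calp^{\perp}=\rho_G^{-1}(P^{\perp})$ and phrase untransvectability via subgroupoids centralising $\calp^{\perp}$. But $\calp^{\perp}$ is built from $\rho_G$; nothing you have written explains why the \emph{same} subgroupoid arises as $\rho_H^{-1}(Q^{\perp})$ on the $H$-side, and without that the test cannot be transported. (Replacing $\calp^{\perp}$ by ``the normaliser of $\calp$'' does not obviously help, since you then have to isolate the perp-part inside it.)

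The paper's proof avoids both problems by a completely different route. Rather than attempting an intrinsic description of $\calp=\rho_G^{-1}(Q)$ directly, it builds a finite descending chain $G=F_0\supsetneq P_1\supsetneq F_1\supsetneq\dots\supsetneq F_n=Q$, where each $P_i$ is a maximal product parabolic subgroup of $F_{i-1}$ (with trivial center) and each $F_i$ is an irreducible factor of $P_i$. The point is that maximal product parabolic subgroups and their irreducible factors \emph{do} transfer exactly through action-like cocycles, by results from \cite{EH} (Lemmas~7.4, 8.5, 9.6): one obtains a positive-measure $W$ and parabolic subgroups $F_i^H,P_i^H$ of $H$ with $(\calf_i)_{|W}=\rho_H^{-1}(F_i^H)_{|W}$ as genuine equalities. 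Only then can Lemmas~\ref{lemma:coll} and~\ref{lemma:thin} be applied to conclude that $F_n^H$ is collapsible and free; and untransvectability of $F_n^H$ is extracted not from any perp-construction but from the combinatorics of the chain via \cite[Lemma~6.6]{EH}. In short, the chain is what manufactures the exact equalities that your direct approach is missing.
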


 Recall the notion of a collapsible parabolic subgroup from Definition~\ref{de:collapsible}. The following lemma extends Lemma~\ref{lemma:collapsible} to a groupoid-theoretic setting.

\begin{lemma}\label{lemma:coll}
Let $G$ be a right-angled Artin group, and let $\calg$ be a measured groupoid over a standard probability space $X$ equipped with an action-like cocycle $\rho:\calg\to G$. Let $Q\subseteq G$ be a parabolic subgroup, and let $\calq=\rho^{-1}(Q)$.

Then $Q$ is collapsible if and only if for every measured subgroupoid $\calb\subseteq\calq$ of infinite type, every measured subgroupoid of $\calg$ that stably normalizes $\calb$ also stably normalizes $\calq$.
\end{lemma}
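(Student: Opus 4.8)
The plan is to prove the groupoid statement as a faithful analogue of the equivalence $(1)\Leftrightarrow(3)$ of Lemma~\ref{lemma:collapsible}, with ``infinite subgroup'' replaced by ``measured subgroupoid of infinite type'' and ``containment of normalizers'' replaced by ``(stable) normalization''. Two auxiliary facts, both extracted from Axiom~(2) in the definition of an action-like cocycle, carry most of the weight. First, for \emph{any} infinite subgroup $B\subseteq G$, the subgroupoid $\rho^{-1}(B)$ is of infinite type: applying Axiom~(2) to the infinite-index inclusion $\{1\}\subseteq B$ over an arbitrary positive measure subset, and using that $\rho$ has trivial kernel (so $\rho^{-1}(\{1\})$ is the trivial groupoid and $\rho$ is injective in restriction to each source-fiber), one sees that on every positive measure set almost every source-fiber of $\rho^{-1}(B)$ is infinite. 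Second, for \emph{any} parabolic subgroup $Q\subseteq G$, the pair $(\rho^{-1}(Q),\rho)$ is tightly $Q$-supported: if $\rho(\rho^{-1}(Q)_{|U})$ were contained in a proper parabolic $Q'\subsetneq Q$ for some positive measure $U$, then $\rho^{-1}(Q')_{|U}=\rho^{-1}(Q)_{|U}$ would have index one in $\rho^{-1}(Q)_{|U}$, contradicting Axiom~(2) for the inclusion $Q'\subsetneq Q$ (which has infinite index, a proper parabolic subgroup of a right-angled Artin group always does). Since stable normalization is defined piecewise, in both directions I may freely pass to a countable partition of $X$ and argue on each piece.

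For the forward implication, assume $Q$ is collapsible, let $\calb\subseteq\calq$ be of infinite type, and let $\caln$ stably normalize $\calb$. Refining the partition witnessing stable normalization by means of \cite[Lemma~3.7]{HH}, I may assume that on each piece $X_i$ the restriction $\caln_{|X_i}$ normalizes $\calb_{|X_i}$ and that $(\calb_{|X_i},\rho)$ is tightly $P_i$-supported for some parabolic $P_i$. Since $\rho(\calb_{|X_i})\subseteq Q$ and $\rho(\calb_{|X_i})\subseteq P_i$, applying tightness to the parabolic $P_i\cap Q$ forces $P_i\subseteq Q$; and $\calb$ being of infinite type together with the injectivity of $\rho$ on source-fibers forces $P_i$ to be infinite. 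Collapsibility of $Q$, through the implication $(1)\Rightarrow(2)$ of Lemma~\ref{lemma:collapsible}, then yields $P_i\times P_i^\perp\subseteq Q\times Q^\perp$. Feeding tight $P_i$-support into \cite[Lemma~3.8]{HH} gives $\rho(\caln_{|X_i})\subseteq P_i\times P_i^\perp\subseteq Q\times Q^\perp$. Finally, as $Q$ is normalized by $Q\times Q^\perp$, Example~\ref{ex:normal} shows that $\rho^{-1}(Q\times Q^\perp)$ normalizes $\calq$; since $\caln_{|X_i}$ is a subgroupoid of $\rho^{-1}(Q\times Q^\perp)_{|X_i}$, it normalizes $\calq_{|X_i}$, and collecting the pieces exhibits the desired stable normalization of $\calq$ by $\caln$.

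For the converse I argue the contrapositive. Suppose $Q$ is not collapsible. By the implication $\neg(1)\Rightarrow\neg(2)$ in the proof of Lemma~\ref{lemma:collapsible}, there is an infinite cyclic parabolic subgroup $B\subseteq Q$ with $B\times B^\perp\nsubseteq Q\times Q^\perp$; as $B\subseteq Q\subseteq Q\times Q^\perp$, this produces a cyclic parabolic $C\subseteq B^\perp$ with $C\nsubseteq Q\times Q^\perp$, and since $C\cap(Q\times Q^\perp)$ is a parabolic subgroup of the infinite cyclic group $C$, it is trivial. Because $C\subseteq B^\perp$ centralizes $B$, Example~\ref{ex:normal} shows that $\calc:=\rho^{-1}(C)$ normalizes $\calb:=\rho^{-1}(B)$; by the first auxiliary fact $\calb$ is of infinite type, and $\calb\subseteq\calq$. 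It remains to see that $\calc$ does not stably normalize $\calq$. If it did, then on some positive measure piece $X_i$ the groupoid $\calc_{|X_i}$ would normalize $\calq_{|X_i}$; since $(\calq,\rho)$ is tightly $Q$-supported by the second auxiliary fact, \cite[Lemma~3.8]{HH} would give $\rho(\calc_{|X_i})\subseteq Q\times Q^\perp$. But $\calc=\rho^{-1}(C)$ is of infinite type, so, using injectivity of $\rho$ on source-fibers, $\rho(\calc_{|X_i})$ contains infinitely many, hence at least one nontrivial, element of $C$; this contradicts $C\cap(Q\times Q^\perp)=\{1\}$.

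The main obstacle, and the step that makes the converse run cleanly, is the second auxiliary fact that $\rho^{-1}(Q)$ is itself tightly $Q$-supported: without it one would control, on each piece, only the normalizer of the \emph{tight support} of $\calq$, which could be strictly larger than $Q\times Q^\perp$ and so would fail to preclude normalization by $\calc$. Once this is in hand, the remaining delicate points---that $\rho$ is injective on source-fibers, that passage to a sub-bisection and restriction to a subset preserve $B$-invariance (so that a subgroupoid of a normalizing groupoid still normalizes), and that infinite type and tight support restrict well along the partition---are routine consequences of the definitions.
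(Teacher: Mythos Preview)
Your proof is correct and follows the paper's argument closely. The forward implication (collapsible $\Rightarrow$ property) is essentially identical to the paper's: partition so that $(\calb_{|X_i},\rho)$ is tightly $P_i$-supported, observe $P_i\subseteq Q$ is infinite, invoke Lemma~\ref{lemma:collapsible} to get $P_i\times P_i^\perp\subseteq Q\times Q^\perp$, and conclude via \cite[Lemma~3.8]{HH} and Example~\ref{ex:normal}.

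The contrapositive differs slightly in its choice of witness. The paper takes $\caln=\rho^{-1}(B\times B^\perp)$ and reaches a contradiction by noting that $(\caln,\rho)$ is tightly $(B\times B^\perp)$-supported (a second appeal to \cite[Lemma~4.15]{EH}), yet would be forced into the proper parabolic $(B\times B^\perp)\cap(Q\times Q^\perp)$. You instead isolate a single cyclic parabolic $C\subseteq B^\perp$ with $C\cap(Q\times Q^\perp)=\{1\}$ and use $\calc=\rho^{-1}(C)$; the contradiction then comes directly from $\calc$ being of infinite type while $\rho(\calc_{|X_i})$ is forced into the trivial group. This is a legitimate and slightly more elementary variant: it trades the second tight-support citation for the observation that an infinite cyclic parabolic either lies in a given parabolic or meets it trivially. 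Both routes rest on the same two auxiliary facts you correctly identify (and which the paper cites from \cite[Lemma~4.15]{EH} rather than re-deriving).
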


\begin{proof}
We first assume that $Q$ is not collapsible, and aim to find a measured subgroupoid $\calb\subseteq\calq$ that violates the conclusion of the lemma. 

Since $Q$ is not collapsible, Lemma~\ref{lemma:collapsible} ensures that there exists an infinite parabolic subgroup $B\subseteq Q$ such that $B\times B^{\perp}\nsubseteq Q\times Q^{\perp}$. Let $\calb=\rho^{-1}(B)$, and let $\caln=\rho^{-1}(B\times B^{\perp})$. Then $\calb$ is of infinite type (because $B$ is infinite and $\rho$ is action-like), and $\caln$ normalizes $\calb$ (Example~\ref{ex:normal}). 

On the other hand we claim that $\caln$ does not stably normalize $\calq$. Indeed, assume towards a contradiction that it does. By \cite[Lemma~4.15]{EH}, the pair $(\calq,\rho)$ is tightly $Q$-supported. Since $\caln$ stably normalizes $\calq$, by \cite[Lemma~3.8]{HH}, there exists a positive measure Borel subset $U\subseteq X$ such that $\caln_{|U}\subseteq\rho^{-1}(Q\times Q^{\perp})_{|U}$. So $\rho(\caln_{|U})$ is contained in $(B\times B^{\perp})\cap (Q\times Q^\perp)$, which is a proper parabolic subgroup of $B\times B^{\perp}$. On the other hand, since $\caln=\rho^{-1}(B\times B^{\perp})$ and $\rho$ is action-like, \cite[Lemma~4.15]{EH} ensures that $(\caln,\rho)$ is tightly $(B\times B^{\perp})$-supported. This is a contradiction, which proves our claim. 

Therefore the subgroupoid $\calb$ violates the conclusion of the lemma, as desired.

\medskip

We now assume that $Q$ is collapsible. Let $\calb\subseteq\calq$ be a measured subgroupoid of infinite type. Up to replacing $X$ by a conull Borel subset and taking a countable Borel partition, \cite[Lemma~3.7]{HH} allows us to assume that $(\calb,\rho)$ is tightly $B$-supported for some parabolic subgroup $B$ of $G$. Since $\calb\subseteq\calq$, we have $B\subseteq Q$. Since $\calb$ is of infinite type and $\rho$ has trivial kernel, the group $B$ is infinite. Since $Q$ is collapsible, Lemma~\ref{lemma:collapsible} ensures that $B\times B^{\perp}\subseteq Q\times Q^{\perp}$. Let $\caln$ be a measured subgroupoid of $\calg$ which stably normalizes $\calb$. Up to a conull Borel subset and a countable Borel partition of the base space, we will assume that $\caln$ normalizes $\calb$. By \cite[Lemma~3.8]{HH}, up to replacing $X$ by a conull Borel subset, we have $\caln\subseteq\rho^{-1}(B\times B^{\perp})$. In particular $\caln\subseteq\rho^{-1}(Q\times Q^{\perp})$. Since $\rho^{-1}(Q\times Q^{\perp})$ normalizes $\calq$ (Example~\ref{ex:normal}), we deduce that $\caln$ normalizes $\calq$, as desired.
\end{proof}

The following lemma is the groupoid-theoretic version of the following observation: a right-angled Artin group is a free product of free abelian groups if and only if it does not contain any subgroup isomorphic to $\mathbb{Z}\times F_2$.

\begin{lemma}\label{lemma:thin}
Let $G$ be a right-angled Artin group, and let $\calg$ be a measured groupoid over a standard probability space $X$, equipped with an action-like cocycle $\rho:\calg\to G$.

Then $G$ is a free product of free abelian groups if and only if for every positive measure Borel subset $U\subseteq X$, there do not exist measured subgroupoids $\cala,\caln\subseteq\calg_{|U}$ such that $\cala$ is amenable and of infinite type, and $\caln$ is everywhere non-amenable and normalizes $\cala$.
\end{lemma}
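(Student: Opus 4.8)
The plan is to read off both implications from the dictionary between the combinatorics of $\Gamma_G$ and amenability of subgroupoids, exactly mirroring the group-theoretic observation. Recall that $G$ is a free product of free abelian groups if and only if $\Gamma_G$ is a disjoint union of cliques, equivalently $\Gamma_G$ contains no full subgraph on three vertices $a,b,c$ with $b$ adjacent to both $a$ and $c$ but $a,c$ non-adjacent. For the ``if'' implication I will prove the contrapositive: if $G$ is not a free product of free abelian groups, then $\Gamma_G$ contains such a triple, and the full subgraph $\Lambda=\{a,b,c\}$ satisfies $G_\Lambda=\langle b\rangle\times F$, where $F=\langle a,c\rangle$ is free of rank $2$ and $\langle b\rangle\cong\mathbb{Z}$ is central. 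Setting $A=\langle b\rangle$, $N=G_\Lambda$, $\cala=\rho^{-1}(A)$, $\caln=\rho^{-1}(N)$ and $U=X$, I claim these witness the failure of the right-hand side: $\cala$ is of infinite type because $A$ is infinite and $\rho$ is action-like; it is amenable because $A\cong\mathbb{Z}$ is amenable and $\rho$ has trivial kernel (\cite[Corollary~3.39]{GH}); the subgroupoid $\caln$ is everywhere non-amenable by condition~(3) in the definition of an action-like cocycle, since $N\supseteq F$ is non-amenable; and $\caln$ normalizes $\cala$ by Example~\ref{ex:normal}, since $A$ is central in $N$.

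For the ``only if'' implication, suppose $G$ is a free product of free abelian groups but, for contradiction, that some positive measure $U$ carries subgroupoids $\cala,\caln\subseteq\calg_{|U}$ as in the statement. Since the restriction of an action-like cocycle to $\calg_{|U}$ is again action-like, I may assume $U=X$. Applying \cite[Lemma~3.7]{HH} and passing to a positive measure piece of a countable partition, I may assume $(\cala,\rho)$ is tightly $P$-supported for some parabolic subgroup $P$; as $\cala$ is of infinite type and $\rho$ has trivial kernel, $P$ is infinite. Because $\caln$ normalizes $\cala$, \cite[Lemma~3.8]{HH} yields a conull $X^*$ with $\rho(\caln_{|X^*})\subseteq P\times P^{\perp}$. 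The structural input I will use is that, in a free product of free abelian groups, every parabolic subgroup is either amenable (when its type is a clique) or an irreducible, non-trivial free product of infinite groups (when its type is disconnected); in particular non-amenable parabolics are never product parabolics. When $P$ is amenable, its type is a clique contained in a single connected component $C$ of $\Gamma_G$, and a direct computation gives $P\times P^{\perp}=G_C$, which is free abelian; hence $\rho^{-1}(P\times P^{\perp})$ is amenable (\cite[Corollary~3.39]{GH}), so $\caln_{|X^*}$ is amenable, contradicting everywhere non-amenability.

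The remaining case, where $P$ is non-amenable, is the main obstacle. Here $P$ is an irreducible free product $A_1*\dots*A_k$ with $k\ge 2$ and $P^{\perp}$ trivial, so the above only gives $\rho(\caln_{|X^*})\subseteq P$. I would resolve it using the action of $P$ on the Bass--Serre tree $T$ of this free product decomposition, whose edge stabilizers are trivial and whose vertex stabilizers are the conjugates of the $A_i$. Tight $P$-support forbids $\cala$ from being reducible into any proper parabolic, hence from being elliptic on $T$; a measured-groupoid amenable-reduction argument on $T$ should then produce a canonical $\cala$-invariant pair of ends $E\subseteq\partial T$. Since $\caln$ normalizes $\cala$, it must preserve this canonical datum, so $\rho(\caln_{|X^*})$ lies in the $P$-stabilizer of $E$; because edge stabilizers are trivial, that stabilizer is virtually cyclic, hence amenable, and once more $\caln_{|X^*}$ is amenable, a contradiction. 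The hard part will be precisely this last step: transporting the classical trichotomy ``an amenable group acting on a tree is elliptic or fixes an end or a pair of ends'' to the amenable subgroupoid $\cala$ equipped with the cocycle $\rho$, and verifying that the normalizing subgroupoid $\caln$ inherits the same invariant pair of ends.
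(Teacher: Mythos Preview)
Your forward direction (producing $\cala=\rho^{-1}(A)$ and $\caln=\rho^{-1}(N)$ from a copy of $\mathbb{Z}\times F_2$ inside $G$) is exactly the paper's argument.

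For the converse, the paper takes a shorter path that avoids your case split entirely. After restricting, as you do, to a positive measure piece where $(\cala,\rho)$ is tightly $A$-supported with $A$ infinite, the paper invokes \cite[Lemma~3.10]{HH}: since the restriction of $\caln$ to that piece is non-amenable and normalizes the restriction of $\cala$, that lemma gives directly that $A^{\perp}$ is non-amenable. Choosing a cyclic parabolic in $A$ and a rank-$2$ free parabolic in $A^{\perp}$ then exhibits $\mathbb{Z}\times F_2$ inside $G$. Note that the paper phrases the implication as ``failure of the groupoid condition $\Rightarrow$ $\mathbb{Z}\times F_2\subseteq G$'' rather than by contradiction, and never needs to ask whether the tight support $A$ is itself amenable.

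Your Case~1 is correct but is subsumed by this. Your Case~2 --- the Bass--Serre tree argument for non-amenable $P$ --- is in outline the right shape (amenability of $\cala$ should force an invariant end or pair of ends of $T$, normalization should pass this to $\caln$, and setwise stabilizers of such data are virtually cyclic because edge stabilizers are trivial), and this is essentially the mechanism behind lemmas such as \cite[Lemma~3.10]{HH}. But you have not actually carried out the groupoid-level boundary argument, and you flag this yourself as ``the hard part''. So your proposal has a genuine gap at exactly the point where the paper's proof short-circuits by citing the existing lemma; what you are sketching is, in effect, a reproof of a special case of that lemma rather than an application of it.
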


\begin{proof}
We first assume that $G$ is not a free product of free abelian groups. Then $G$ contains a subgroup $N$ isomorphic to $\mathbb{Z}\times F_2$. We let $A$ be the center of $N$. We let $\cala=\rho^{-1}(A)$ and $\caln=\rho^{-1}(N)$. Then $\cala$ is amenable (using that $\rho$ has trivial kernel, see e.g.\ \cite[Corollary~3.39]{GH}) and of infinite type (using that $A$ is infinite and $\rho$ is action-like). And $\caln$ is everywhere non-amenable (because $N$ is non-amenable and $\rho$ is action-like), and $\caln$ normalizes $\cala$ (Example~\ref{ex:normal}). This shows that the property from the lemma fails.

Conversely, let us assume that $\calg$ fails to satisfy the property of the lemma, and let us prove that $G$ contains a subgroup isomorphic to $\mathbb{Z}\times F_2$. By assumption, there exists a positive measure Borel subset $U\subseteq X$ and measured groupoids $\cala,\caln\subseteq\calg_{|U}$ as in the lemma. By \cite[Lemma~3.7]{HH}, we can find a positive measure Borel subset $V\subseteq U$ such that $(\cala_{|V},\rho)$ is tightly $A$-supported for some parabolic subgroup $A$ of $G$. The group $A$ is infinite because $\cala$ is of infinite type and $\rho$ has trivial kernel. Since $\caln_{|V}$ is non-amenable and normalizes $\cala_{|V}$, it follows from \cite[Lemma~3.10]{HH} that $A^{\perp}$ is non-amenable, and therefore $G$ contains a parabolic subgroup isomorphic to $\mathbb{Z}\times F_2$, as desired.
\end{proof}

We are now in position to complete the proof of the main proposition of the section.

\begin{proof}[Proof of Proposition~\ref{prop:non-abelian-class}]
As explained in the last paragraph of Section~\ref{sec:groupoids}, we can find a measured groupoid $\calg$ over a standard probability space $X$, equipped with two action-like cocycles $\rho_G:\calg\to G$ and $\rho_H:\calg\to H$. Assuming that $\Gamma_G$ contains a non-abelian untransvectable equivalence class, we will prove that so does $\Gamma_H$.

Let $\Theta\subseteq\Gamma_G$ be a subgraph associated with an untransvectable non-abelian equivalence class of $G$, let $Q=G_\Theta$, and let $\calq=\rho_G^{-1}(Q)$. By induction, we construct a sequence of full subgraphs
 \[\Theta\subseteq\Upsilon_n\subsetneq\Lambda_n\subsetneq\Upsilon_{n-1}\subsetneq\dots\subsetneq\Upsilon_1\subsetneq\Lambda_1\subseteq\Upsilon_0=\Gamma_G,\] coming with associated parabolic subgroups $P_i=G_{\Lambda_i}$ and $F_i=G_{\Upsilon_i}$, in the following way: we let $F_0=G$, and for every $i\in\{1,\dots,n\}$, 
\begin{itemize}
\item $\Lambda_i$ is the type of a maximal product parabolic subgroup $P_i$ of $F_{i-1}$ containing $(Q\times Q^{\perp})\cap F_{i-1}$ (if this exists);
\item $\Upsilon_i$ is the type of the unique irreducible factor $F_i$ of $P_i$ containing $Q$ (notice indeed that since $Q$ is a free parabolic subgroup, it must be entirely contained in a single irreducible factor of $P$).
\end{itemize}
We stop when $(Q\times Q^{\perp})\cap F_i$ is not contained in any product parabolic subgroup of $F_i$. Note that all inclusions in the above sequence (except possibly $\Lambda_1\subseteq\Upsilon_0$) are indeed strict: this is because $F_i$ is an irreducible factor of the product parabolic subgroup $P_i$ (showing that $\Upsilon_i\subsetneq \Lambda_i$), and $P_i$ is a product parabolic subgroup in $F_{i-1}$, which is irreducible for $i\ge 1$ (showing that $\Lambda_i\subsetneq\Upsilon_{i-1}$). In particular, the process terminates after finitely many steps.

For every $i\in\{1,\dots,n\}$, we let $\bar F_i:=F_i^{\perp}\cap F_{i-1}$.  Since $F_i$ is a direct factor of $P_i$, and $P_i\subseteq F_{i-1}$, we have $P_i\subseteq F_i\times \bar F_i$. Then the maximality of $P_i$ (as a product parabolic subgroup of $F_{i-1}$) ensures that $P_i=F_i\times \bar F_i$, and $\bar F_i$ is the product of all irreducible factors of $P_i$ except $F_i$ (in particular $\bar F_i\neq\{1\}$). Since $Q\subseteq F_i$,  we have $\bar F_i\subset Q^\perp$. Since in addition $Q^{\perp}\cap F_{i-1}\subseteq P_i$ by construction, we have 
\begin{equation}\label{eq:qperp}
Q^\perp\cap F_{i-1}=(Q^\perp\cap F_i)\times \bar F_i. 
\end{equation}

\begin{claim}
For every $i\in\{0,\dots,n\}$, the subgroup $Q$ is untransvectable inside $F_i$, i.e.\ it is untransvectable when viewed as a parabolic subgroup of $F_i$.
\end{claim}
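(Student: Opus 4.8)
The claim asserts that $Q$ remains untransvectable in each $F_i$, where $Q=G_\Theta$ is the parabolic subgroup of an untransvectable non-abelian equivalence class. I would prove this by induction on $i$, the base case $i=0$ being the hypothesis that $\Theta$ is untransvectable in $\Gamma_G=\Upsilon_0$. For the inductive step, I assume $Q$ is untransvectable inside $F_{i-1}$ and aim to show it is untransvectable inside $F_i$. Recall untransvectability of $\Theta$ in a graph $\Gamma'$ means: there is no vertex $w\in V\Gamma'\setminus V\Theta$ such that $\lk_{\Gamma'}(v)\subseteq\st_{\Gamma'}(w)$ for every $v\in\Theta$; equivalently (since $\Theta^\perp=\lk(v)$ for non-abelian classes), no vertex $w$ outside $\Theta$ satisfies $\Theta^\perp\cap F'\subseteq\st_{F'}(w)$.

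\textbf{The inductive step via link comparison.}
The key is to compare links and stars inside $F_i=G_{\Upsilon_i}$ with those inside $F_{i-1}=G_{\Upsilon_{i-1}}$. Suppose for contradiction that $Q$ is transvectable in $F_i$: there exists a vertex $w\in V\Upsilon_i\setminus V\Theta$ with $\lk_{\Upsilon_i}(v)\subseteq\st_{\Upsilon_i}(w)$ for all $v\in\Theta$, equivalently $\Theta^\perp\cap F_i\subseteq\st_{\Upsilon_i}(w)$. I want to promote this $w$ to a transvection witness inside $F_{i-1}$, contradicting the inductive hypothesis. Using the decomposition~\eqref{eq:qperp}, namely $Q^\perp\cap F_{i-1}=(Q^\perp\cap F_i)\times\bar F_i$, I compute how the star of $w$ in $\Upsilon_{i-1}$ relates to its star in $\Upsilon_i$: since $\Upsilon_i$ is an irreducible factor of the product $P_i=F_i\times\bar F_i$, every vertex of $\Upsilon_i$ (in particular $w$) is adjacent in $\Upsilon_{i-1}$ to all of $\bar F_i$, so $\st_{\Upsilon_{i-1}}(w)\supseteq\st_{\Upsilon_i}(w)\cup V\bar F_i$. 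Combined with $\Theta^\perp\cap F_{i-1}=(\Theta^\perp\cap F_i)\sqcup V\bar F_i$ (the vertices of $\bar F_i$ being joined to all of $\Theta$), this should yield $\Theta^\perp\cap F_{i-1}\subseteq\st_{\Upsilon_{i-1}}(w)$, exhibiting $w$ as a transvection witness for $Q$ inside $F_{i-1}$.

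\textbf{Ruling out the degenerate case.}
One subtlety: the witness $w$ lies in $V\Upsilon_i\setminus V\Theta$, and I must confirm $w\notin V\Theta$ still holds as a witness inside $F_{i-1}$, which is immediate since $\Upsilon_i\supseteq\Theta$ and $w\notin V\Theta$. A separate point worth checking is that $w$ is indeed a vertex of $F_{i-1}$ distinct from the class $\Theta$, and that the product structure does not accidentally place $w$ inside $\Theta^\perp$ in a way that trivializes the argument — but since $\bar F_i\subseteq Q^\perp$ and $w\in\Upsilon_i=F_i$ with $Q\subseteq F_i$, the vertex $w$ is genuinely in the same irreducible factor as $Q$, not in $\bar F_i$.

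\textbf{Main obstacle.}
The delicate part will be verifying the star-inclusion bookkeeping across the product decomposition: one must be careful that passing from $\Upsilon_i$ to $\Upsilon_{i-1}$ adds exactly the adjacencies to $\bar F_i$ (because $\Upsilon_i$ is a \emph{factor} of the join $P_i=F_i\circ\bar F_i$) and does not introduce other vertices into $\st(w)$ that could be misattributed, nor remove any link vertices of the $\Theta$-class. Equivalently, I expect the heart of the argument to be showing that the transvection relation $\Theta^\perp\subseteq\st(w)$ is inherited upward along the chain precisely because $\bar F_i$ consists of vertices adjacent to everything in $F_i$, so that enlarging from $\Upsilon_i$ to $\Upsilon_{i-1}$ enlarges both sides of the inclusion compatibly.
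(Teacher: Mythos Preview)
Your proposal is correct and follows essentially the same approach as the paper: induction on $i$, a contradiction at step $i$ witnessed by a vertex $w\in\Upsilon_i\setminus\Theta$, and the promotion of $w$ to a witness in $\Upsilon_{i-1}$ via Equation~\eqref{eq:qperp} together with the observation that $w\in\Upsilon_i$ is adjacent to all of $\bar F_i$ (the paper phrases this as $\bar F_i\subseteq Z^\perp$ for $Z=G_w$). Your worries in the ``Main obstacle'' paragraph are unnecessary: stars only grow when passing to a larger full subgraph, and Equation~\eqref{eq:qperp} gives you exactly the vertex decomposition of $\Theta^\perp\cap\Upsilon_{i-1}$ you need, so no extra bookkeeping is required.
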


\begin{proof}[Proof of the claim]
We argue by induction. The case $i=0$ holds by our choice of $Q$. So let $i\in\{1,\dots,n\}$; assuming that the property holds for $i-1$, let us prove it for $i$.

Assume towards a contradiction that $Q$ fails to be untransvectable inside $F_{i}$. Then there exists a standard cyclic parabolic subgroup $Z\subseteq F_{i}$, not contained in $Q$, such that $Q^{\perp}\cap F_{i}\subseteq Z\times Z^{\perp}$. Since $Z\subseteq F_i$, we have $\bar F_{i}\subseteq Z^{\perp}$, so Equation~\eqref{eq:qperp} implies that $Q^\perp\cap F_{i-1}\subseteq Z\times Z^{\perp}$. Therefore $Q$ is transvectable inside $F_{i-1}$, contradicting our induction hypothesis.  
\end{proof}

\begin{claim}\label{claim:trivial-center}
For every $i\in\{1,\dots,n\}$, the subgroup $P_i$ has trivial center.
\end{claim}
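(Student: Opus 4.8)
The plan is to argue by contradiction using the description of the center of a standard parabolic subgroup recalled in Section~\ref{sec:raag}: for a full subgraph $\Lambda$, the center of $G_\Lambda$ is spanned by the vertices of $\Lambda$ adjacent to all other vertices of $\Lambda$. So if I suppose $P_i=G_{\Lambda_i}$ has nontrivial center, there is a vertex $z\in V\Lambda_i$ adjacent to every other vertex of $\Lambda_i$. My goal is then to show that $z$ serves as a transvection target for the class $\Theta$ inside $F_{i-1}$, which contradicts the first Claim of the section (providing that $Q=G_\Theta$ is untransvectable inside $F_{i-1}$, using that $i-1\in\{0,\dots,n\}$).

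First I would record two easy facts about $z$. Since $\Theta\subseteq\Upsilon_i\subseteq\Lambda_i$ and $\Theta$ is a non-abelian equivalence class, it has at least two vertices, pairwise non-adjacent; as $z$ is adjacent to all vertices of $\Lambda_i$ other than itself, it cannot lie in $\Theta$ (it would then be adjacent to another vertex of $\Theta$). Hence $z\in V\Upsilon_{i-1}\setminus V\Theta$ and $z$ is adjacent to every vertex of $\Theta$. The crux is then to locate $\lk_{\Upsilon_{i-1}}(v)$, for $v\in\Theta$, inside $\Lambda_i$. Here I would use the fact recalled on page~\pageref{equivalence-classes} that for a non-abelian equivalence class one has $\Theta^{\perp}=\lk_{\Gamma_G}(v)$ for every $v\in\Theta$. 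Intersecting the standard parabolic $Q^\perp=G_{\Theta^\perp}$ with the standard parabolic $F_{i-1}=G_{\Upsilon_{i-1}}$ (and using that $\Upsilon_{i-1}$ is a full subgraph) gives $Q^{\perp}\cap F_{i-1}=G_{\lk_{\Upsilon_{i-1}}(v)}$. Since $Q^{\perp}\cap F_{i-1}\subseteq(Q\times Q^{\perp})\cap F_{i-1}\subseteq P_i$ by construction, comparing types of standard parabolics yields $\lk_{\Upsilon_{i-1}}(v)\subseteq\Lambda_i$.

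Finally I would combine the two steps: because $z$ is adjacent to every vertex of $\Lambda_i$ distinct from itself and $\lk_{\Upsilon_{i-1}}(v)\subseteq\Lambda_i$, every vertex of $\lk_{\Upsilon_{i-1}}(v)$ is either equal to $z$ or adjacent to $z$, hence lies in $\st_{\Upsilon_{i-1}}(z)$. Thus $\lk_{\Upsilon_{i-1}}(v)\subseteq\st_{\Upsilon_{i-1}}(z)$ for every $v\in\Theta$, with $z\in V\Upsilon_{i-1}\setminus V\Theta$, which is exactly the statement that $Q=G_{\Theta}$ is transvectable inside $F_{i-1}$; this contradicts the first Claim, so $P_i$ has trivial center. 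The main obstacle is the bookkeeping in the middle step---correctly identifying the type of $Q^{\perp}\cap F_{i-1}$ as $\lk_{\Upsilon_{i-1}}(v)$ and making sure all links and stars are computed inside the subgraph $\Upsilon_{i-1}$ rather than in $\Gamma_G$---but once the equality $Q^{\perp}\cap F_{i-1}=G_{\lk_{\Upsilon_{i-1}}(v)}$ is in hand the remaining deductions are immediate. Note that this argument does not even require the splitting $P_i=F_i\times\bar F_i$ established before the claim; it only uses the containment $(Q\times Q^\perp)\cap F_{i-1}\subseteq P_i$.
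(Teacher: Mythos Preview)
Your proof is correct and follows essentially the same approach as the paper's: assume a nontrivial central element, and show it provides a transvection of $Q$ inside $F_{i-1}$, contradicting the previous claim. The paper phrases this in group-theoretic terms (if $Z$ is a central cyclic parabolic of $P_i$, then $(Q\times Q^{\perp})\cap F_{i-1}\subseteq P_i\subseteq Z\times Z^{\perp}$, and $Z\nsubseteq Q$ since $Q$ is non-abelian free), while you unpack the same argument on the level of defining graphs; the content is identical.
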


\begin{proof}[Proof of the claim]
Otherwise $(Q\times Q^{\perp})\cap F_{i-1}$ is contained in $Z\times Z^{\perp}$ for some standard cyclic parabolic subgroup $Z$ contained in the center of $P_i$. Notice that $Z$ is not contained in $Q$ because $Q$ is a non-abelian free group. This contradicts the previous claim.
\end{proof}

\begin{claim}\label{claim:fnb}
One has $F_n=Q$.
\end{claim}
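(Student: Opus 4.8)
The plan is to show $F_n = Q$ by analyzing the termination condition of the inductive construction together with the two claims already established. We have constructed the sequence so that the process stops at step $n$ precisely when $(Q \times Q^\perp) \cap F_n$ is not contained in any product parabolic subgroup of $F_n$. First I would unpack what this termination condition tells us about the structure of $F_n$ relative to $Q$. Recall that $Q = G_\Theta$ is a non-abelian free parabolic subgroup (its type being a non-abelian equivalence class), and by the first claim $Q$ is untransvectable inside $F_n$. The goal is to rule out the possibility that $F_n$ properly contains $Q$, and the key leverage should be that any proper containment would produce either a product decomposition of $(Q \times Q^\perp) \cap F_n$ inside $F_n$ (contradicting termination) or a transvection (contradicting the first claim).

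The heart of the argument, I expect, is the following. Work inside $F_n$ and consider $Q^\perp \cap F_n$, the ``orthogonal complement'' of $Q$ relative to $F_n$. If $Q^\perp \cap F_n$ were nontrivial, then $(Q \times Q^\perp) \cap F_n = Q \times (Q^\perp \cap F_n)$ would be a nontrivial product parabolic subgroup of $F_n$ (since $Q$ itself is nonabelian, hence infinite), which immediately contradicts the termination condition. Therefore I would first argue that $Q^\perp \cap F_n = \{1\}$, i.e.\ no vertex of $F_n$ outside $\Theta$ is adjacent to all of $\Theta$. Next, suppose for contradiction that $\Theta \subsetneq \Upsilon_n$, so there is a vertex $w \in V\Upsilon_n \setminus V\Theta$. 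Since $\Theta$ is a non-abelian equivalence class, for every $v \in \Theta$ we have $\Theta^\perp = \lk_{\Gamma_G}(v)$ (as noted on the page about equivalence classes), which constrains how $w$ can interact with $\Theta$. I would examine the relationship between $w$ and the vertices of $\Theta$: either $w$ is adjacent to all of $\Theta$ (forcing $w \in Q^\perp \cap F_n$, contradicting triviality of that orthogonal complement), or $w$ fails to be adjacent to some vertex of $\Theta$, in which case I would derive a transvection $\lk_{\Gamma_G}(v) \cap F_n \subseteq \st_{F_n}(w)$ for an appropriate $v \in \Theta$, contradicting untransvectability of $Q$ inside $F_n$ from the first claim.

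The main obstacle, and the step requiring the most care, will be establishing that the only alternatives are the two described above — that is, ruling out a more subtle intermediate configuration where $w$ is neither fully adjacent to $\Theta$ nor yields an outright transvection. Here I would lean on the combinatorial structure of links of a non-abelian equivalence class: because $\Theta$ is an equivalence class, all vertices of $\Theta$ share the same link outside $\Theta$, namely $\Theta^\perp$, and no two vertices of $\Theta$ are adjacent. This uniformity is exactly what should force the dichotomy. Concretely, if $w$ is adjacent to at least one vertex of $\Theta$ but not all, I would compare $\lk_{\Gamma_G}(w)$ with the common link $\Theta^\perp$ of the $\Theta$-vertices to locate a transvection, using that $\lk(v) \subseteq \st(w)$ is the defining condition for transvectability. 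The triviality of $Q^\perp \cap F_n$ established in the previous step is what prevents $w$ from being fully adjacent to $\Theta$, so the only surviving case produces a transvection and the desired contradiction.

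Once $\Upsilon_n = \Theta$ is shown, we have $F_n = F_{\Upsilon_n} = G_\Theta = Q$, completing the proof. I would present this as: combine the triviality of $Q^\perp \cap F_n$ (from the termination hypothesis) with untransvectability of $Q$ inside $F_n$ (the first claim) to conclude that $F_n$ admits no vertices beyond those of $\Theta$, hence $F_n = Q$.
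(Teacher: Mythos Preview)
Your approach is correct and close to the paper's. Both arguments first deduce $Q^\perp \cap F_n = \{1\}$ from the termination condition, then show that any vertex of $\Upsilon_n$ outside $\Theta$ produces a transvection contradicting the first claim. The only difference is where the transvection is located: the paper passes through Equation~\eqref{eq:qperp} to get $Q^\perp \cap F_{n-1} = \bar F_n$, then notes that any cyclic $Z\subseteq F_n$ not in $Q$ has $\bar F_n\subseteq Z^\perp$, contradicting untransvectability of $Q$ inside $F_{n-1}$. You instead work directly inside $F_n$, which is slightly more economical and avoids the detour.

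One remark: your ``main obstacle'' is illusory, and the argument is simpler than you anticipate. Since $\lk_{\Gamma_G}(v) = \Theta^\perp$ for every $v \in \Theta$, and you have already shown $\Theta^\perp \cap \Upsilon_n = \emptyset$, the link of each $v \in \Theta$ \emph{inside} $\Upsilon_n$ is empty. Hence for any $w \in V\Upsilon_n \setminus V\Theta$ the condition $\lk_{\Upsilon_n}(v) \subseteq \st_{\Upsilon_n}(w)$ holds vacuously, and $Q$ is transvectable in $F_n$; there is no intermediate configuration to rule out. Your dichotomy collapses: the first alternative is excluded by $Q^\perp \cap F_n = \{1\}$, and the second is automatic.
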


\begin{proof}[Proof of the claim]
Since $(Q\times Q^{\perp})\cap F_{n}$ is not a product parabolic subgroup, we must have $Q^\perp\cap F_n=\{1\}$. It follows from Equation~\eqref{eq:qperp} that $Q^{\perp}\cap F_{n-1}=\bar F_n$ (this still holds in the degenerate case where $n=0$, with the conventions $F_{-1}=G$ and $\bar F_0=\{1\}$). If there exists a cyclic parabolic subgroup $Z$ of $F_n$ not in $Q$, then $Q^{\perp}\cap F_{n-1}\subseteq Z^\perp$, contradicting that $Q$ is untransvectable inside $F_{n-1}$.
\end{proof}

For every $i\in\{1,\dots,n\}$, let $\calp_i=\rho_G^{-1}(P_i)$ and $\calf_i=\rho_G^{-1}(F_i)$. In particular $\calq=\calf_n$.

The group $P_1$ is a maximal product parabolic subgroup of $F_0$, and is non-abelian (Claim~\ref{claim:trivial-center}). Since $\rho_G$ is action-like, it follows that $\calp_1$ is everywhere non-amenable, and that for every positive measure Borel subset $U\subseteq X$, the pair $((\calp_1)_{|U},\rho_G)$ is tightly $P_1$-supported \cite[Lemma~4.15]{EH}. In particular $(\calp_1,\rho_G)$ is \emph{nowhere of isolated clique type} in the sense of \cite[Section~7.2]{EH}. We can therefore apply \cite[Lemma~7.4(1)]{EH} and deduce that $(\calg,\calp_1)$ satisfies Property~$(\mathrm{P}_{\mathrm{prod}})$ from \cite[Definition~7.2]{EH}. We now apply \cite[Lemma~7.4(2)]{EH} to the cocycle $\rho_H$. It implies that there exists a positive measure Borel subset $U\subseteq X$ such that either  $(\calp_1)_{|U}=\rho_H^{-1}(P_1^H)_{|U}$ for some maximal product parabolic subgroup $P_1^H$ of $H$, or else $(\calp_1)_{|U}$ is contained in a subgroupoid $\calq$ of $\calg_{|U}$ such that $(\calq,\rho_H)$ is of isolated clique type. But the second case is excluded as it would imply that $(\calp_1)_{|U}$ is amenable. So $(\calp_1)_{|U}=\rho_H^{-1}(P_1^H)_{|U}$ for some maximal product parabolic subgroup $P_1^H$ of $H$, and using again that $\calp_1$ is everywhere non-amenable, we deduce that $P_1^H$ is non-abelian. 

Since $P_1$ has trivial center (Claim~\ref{claim:trivial-center}), it follows from \cite[Lemma~9.6]{EH} that $P_1^H$ has trivial center (indeed, in the terminology from \cite{EH}, this is characterized by the fact that there is no subgroupoid $\calc$ of $(\calp_1)_{|U}$ such that $((\calp_1)_{|U},\calc)$ satisfies Property~$(\mathrm{P}_{\mathrm{clique}})$, a property that is phrased with no reference to any action-like cocycle).

Finally, since $F_1$ is an irreducible factor of $P_1$, it follows from \cite[Lemma~8.5]{EH} that we can find a positive measure Borel subset $V\subseteq U$ such that $(\calf_1)_{|V}=\rho_H^{-1}(F_1^H)_{|V}$ for some irreducible factor $F_1^H$ of $P_1^H$. 

 Repeating this procedure inductively on $i$, we obtain a sequence of parabolic subgroups $F^H_i$ and $P^H_i$ of $H$, and a positive measure Borel subset $W\subseteq X$, such that $(\calf_i)_{|W}=\rho_H^{-1}(F_i^H)_{|W}$ and $(\calp_i)_{|W}=\rho_H^{-1}(P_i^H)_{|W}$, where $F^H_i$ is an irreducible factor of $P^H_i$, and $P^H_i$ is a maximal product parabolic subgroup in $F^H_{i-1}$ with trivial center.

\medskip

In particular $\calq_{|W}=\rho_H^{-1}(F_n^H)_{|W}$. It remains to show that the parabolic subgroup $F_n^H$ is collapsible, non-abelian free, and untransvectable, as this is enough to conclude that the type of $F_n^H$ is a non-abelian equivalence class (see Remark~\ref{rk:na}). This is done in the three points below.

$\bullet$ Since $F_n=Q$ (Claim~\ref{claim:fnb}), it is collapsible. Since $\rho_G^{-1}(F_n)_{|W}=\rho_H^{-1}(F_n^H)_{|W}$, we can apply Lemma~\ref{lemma:coll} in the ambient groupoid $\calg_{|W}$: this lemma gives a characterization of collapsibility that is independent of any action-like cocycle, so we deduce that $F_n^H$ is also collapsible. 

$\bullet$ Since $F_n=Q$ is a free product of free abelian groups, Lemma~\ref{lemma:thin} (applied within the groupoid $\calg_{|W}$) ensures that $F_n^H$ is a free product of free abelian groups (in fact $F_n^H$ is a free group, using that $H$ is clique-reduced and $F^H_n$ is collapsible). And since $F_n=Q$ is non-amenable and $\rho_G$ is action-like, the groupoid $\calq=\calf_n$ is everywhere non-amenable. Using that $\rho_H$ has trivial kernel, it follows that $F_n^H$ is also non-abelian (by e.g.\ \cite[Corollary~3.39]{GH}).

$\bullet$ We finally prove that $F_n^H$ is untransvectable. For this, after collapsing the subgraph corresponding to the type of $F_n^H$, we view $H$ as a graph product where one vertex group (associated with a vertex which we denote $v_0$) is a non-abelian free group, and all other vertex groups are isomorphic to $\mathbb{Z}$. We have just constructed a sequence \[F_n^H\subseteq P_n^H\subseteq\dots\subseteq P_1^H\subseteq F_0^H=H\] of parabolic subgroups of $H$ -- and they are still parabolic subgroups for the new graph product structure because they all contain $F_n^H$. Each $P_i^H$ is a maximal product parabolic subgroup of $F_i^H$, and the type of $P_i^H$ (in the new graph product structure) is not a clique because $P_i^H$ has trivial center. Also, since $P_i^H$ has trivial center, its type cannot have a vertex $v$ joined to all other vertices, except if this vertex is $v_0$, which only happens when $i=n$. Therefore \cite[Lemma~6.6]{EH} applies and shows that $F_n^H$ is untransvectable.
\end{proof}

\section{Strongly untransvectable cyclic parabolic subgroups}

Let $G$ be a right-angled Artin group. We say that an infinite cyclic parabolic subgroup $P$ is \emph{strongly untransvectable} if it is untransvectable, and the common normalizer of all untransvectable cyclic parabolic subgroups that commute with $P$ is equal to $P$. We will likewise say that a vertex $v\in V\Gamma_G$ is \emph{strongly untransvectable} if the parabolic subgroup $G_v$ is strongly untransvectable.

\begin{rk}\label{rk:strongly-untransvectable}
When $G$ is transvection-free, every cyclic parabolic subgroup $P$ is strongly untransvectable, as a consequence of the fact that $(\{v\}^{\perp})^{\perp}=\{v\}$, for $v\in V\Gamma_G$ giving the type of $P$.  
\end{rk}

\begin{prop}\label{prop:strongly-untransvectable}
Let $G$ and $H$ be two clique-reduced right-angled Artin groups that are measure equivalent. If every untransvectable cyclic parabolic subgroup of $G$ is strongly untransvectable, then the same holds in $H$. 
\end{prop}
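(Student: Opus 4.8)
The plan is to reformulate strong untransvectability as an intersection-of-normalizers condition, to show that this intersection is detected intrinsically by the measured groupoid, and then to transport it using the measure equivalence invariance of the untransvectable extension graph \cite[Corollary~9.4]{EH}. Throughout, I set up the coupling as in the proof of Proposition~\ref{prop:non-abelian-class}: a measured groupoid $\calg$ over $X$ carrying two action-like cocycles $\rho_G\colon\calg\to G$ and $\rho_H\colon\calg\to H$, and I argue by contradiction, assuming that some untransvectable cyclic parabolic subgroup $P_H$ of $H$ fails to be strongly untransvectable.

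First I would record a group-theoretic reformulation. Fix an untransvectable cyclic parabolic subgroup $P$ and let $M$ be the common normalizer of all untransvectable cyclic parabolic subgroups commuting with $P$ (this family contains $P$ itself, since $P$ is abelian). As each member $Z$ of the family has normalizer $Z\times Z^{\perp}$ and satisfies $P\subseteq Z\times Z^{\perp}$, we get $P\subseteq M\subseteq P\times P^{\perp}$, hence $M=P\times(M\cap P^{\perp})$ and $P$ is strongly untransvectable if and only if $M=P$. Moreover any cyclic parabolic subgroup contained in $M\cap P^{\perp}$ is distinct from $P$ and, being irreducible and contained in each $Z\times Z^{\perp}$, commutes with every such $Z$; so $P$ fails to be strongly untransvectable precisely when there is a cyclic parabolic subgroup $P'\neq P$ commuting with every untransvectable cyclic parabolic subgroup commuting with $P$. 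Crucially, $P'$ need not itself be untransvectable -- this is why the failure of strong untransvectability cannot be read off the untransvectable extension graph alone, and is the source of the main difficulty below.

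Next I would express $M$ intrinsically in the groupoid. Writing $\calp=\rho_G^{-1}(P)$, the untransvectable-cyclic-type subgroupoids commuting with $\calp$, as well as their commuting relation, are detected by \cite{EH} independently of the cocycle. Via the trivial identity $\rho_G^{-1}\bigl(\bigcap_Z(Z\times Z^{\perp})\bigr)=\bigcap_Z\rho_G^{-1}(Z\times Z^{\perp})$, the common normalizer $M$ then corresponds to the subgroupoid $\calm$ of elements that stably normalize every such subgroupoid; using that $\rho_G$ is action-like and that the normalizer of a cyclic parabolic subgroup is recovered by normalization (in the manner of \cite[Lemma~3.8]{HH}), one gets $\calm=\rho_G^{-1}(M)$, and likewise $\rho_H^{-1}(M_H)$ on the $H$-side. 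Since the cocycles are action-like, the inclusion $P\subsetneq M$ (which is of infinite index, as $M\cap P^{\perp}$ is then infinite) is equivalent to $\calp$ having infinite index in $\calm$, a property internal to $\calg$. Because the description of $\calm$ is intrinsic, and the families of untransvectable-cyclic-type subgroupoids commuting with $\calp$ are identified between the two cocycles by \cite[Corollary~9.4]{EH}, this infinite-index property transports. Thus $P_H\subsetneq M_H$ forces $P_G\subsetneq M_G$ for the corresponding untransvectable cyclic parabolic subgroup $P_G$ of $G$, contradicting the hypothesis that every untransvectable cyclic parabolic subgroup of $G$ is strongly untransvectable.

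The main obstacle is the intrinsic identification $\calm=\rho_G^{-1}(M)$ and its compatibility with \cite[Corollary~9.4]{EH}. The delicate point is the inclusion asserting that an element stably normalizing \emph{every} untransvectable-cyclic-type subgroupoid commuting with $\calp$ must land in $M$: this quantifies over the infinite family of all conjugate untransvectable cyclic parabolic subgroups in $P^{\perp}$, and the witness $P'$ realizing $P\subsetneq M$ may be transvectable, hence invisible to the untransvectable extension graph. I expect to handle this by a descending chain argument reducing the defining intersection of $M$ to finitely many members $Z_1,\dots,Z_k$, matching those finitely many subgroupoids together with their normalizers on a common positive measure Borel subset via the constructions behind \cite[Corollary~9.4]{EH} (using \cite[Lemma~3.7]{HH} to arrange tight supports), and then verifying that the finite matching is compatible with the full families on both sides, so that the infinite-index containment transports symmetrically rather than merely giving an over-estimate of $M_G$. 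Once this compatibility is established, the normalizer computation and the transfer of the infinite-index containment are routine.
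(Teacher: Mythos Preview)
Your proposal assembles exactly the right ingredients---the descending chain reduction to a finite intersection of normalizers, the transport of untransvectable cyclic subgroupoids via \cite[Proposition~9.1]{EH}, and the recovery of normalizers through \cite[Lemma~3.8]{HH}---and is essentially the paper's argument. The one genuine issue is the \emph{direction} in which you run it, and this is precisely the obstacle you flag but do not resolve.

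Arguing by contradiction from the $H$-side, you reduce $M_H=\bigcap_{i=1}^k N_H(Z_i)$ and transport the $Z_i$ to untransvectable cyclic parabolic subgroups $Z_i^G$ commuting with $P_G$. This yields $\bigcap_i N_G(Z_i^G)\supsetneq P_G$. But $M_G$ is the intersection over \emph{all} untransvectable cyclic neighbours of $P_G$, so you only know $M_G\subseteq\bigcap_i N_G(Z_i^G)$, which is the over-estimate you anticipate. Your proposed repair (``verifying that the finite matching is compatible with the full families on both sides'') would require matching infinitely many subgroupoids simultaneously on a common positive measure set; each application of \cite[Proposition~9.1]{EH} may shrink the base, and there is no mechanism to guarantee a positive-measure intersection over the whole family.

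The paper sidesteps this entirely by reversing the direction, and this is the one-line fix to your argument. Start with $B$ on the $H$-side, match it to $A$ on the $G$-side, and perform the descending chain \emph{there}: since $A$ is strongly untransvectable by hypothesis, finitely many $A_1,\dots,A_k$ already give $\bigcap_i N_G(A_i)=A$. Transport these to untransvectable $B_1,\dots,B_k$ commuting with $B$, match normalizers on a common positive measure set, and conclude $\bigcap_i N_H(B_i)=B$. Since the $B_i$ lie in the family defining $M_H$, this forces $M_H\subseteq B$, hence $M_H=B$. The asymmetry is the point: the finite reduction is useful on the side where the intersection \emph{equals} the cyclic subgroup, because transporting an equality gives an upper bound on $M_H$, which is what you need.
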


\begin{proof}
As explained in the last paragraph of Section~\ref{sec:groupoids}, we can find a measured groupoid $\calg$ over a standard probability space $X$, equipped with two action-like cocycles $\rho_G:\calg\to G$ and $\rho_H:\calg\to H$.

Let $B\subseteq H$ be an untransvectable cyclic parabolic subgroup. Let $\cala=\rho_H^{-1}(B)$. By \cite[Proposition~9.1]{EH}, there exist an untransvectable cyclic parabolic subgroup $A\subseteq G$ and a positive measure Borel subset $U\subseteq X$ such that $\cala_{|U}=\rho_G^{-1}(A)_{|U}$. Since $A$ is strongly untransvectable, and since every non-ascending chain of parabolic subgroups terminates, we can find a finite collection $A_1,\dots,A_k$ of untransvectable cyclic parabolic subgroups commuting with $A$, such that $N_G(A_1)\cap\dots\cap N_G(A_k)=A$. For every $i\in\{1,\dots,k\}$, let $\cala_i=\rho_G^{-1}(A_i)$, and let $\caln_i=\rho_G^{-1}(N_G(A_i))$. Applying \cite[Proposition~9.1]{EH} again, there exist a positive measure Borel subset $V\subseteq U$, and untransvectable cyclic parabolic subgroups $B_1,\dots,B_k$, such that for every $i\in\{1,\dots,k\}$, one has $(\cala_i)_{|V}=\rho_H^{-1}(B_i)_{|V}$.

Since all subgroups $A_i$ commute with $A$, the subgroupoids $(\cala_i)_{|V}$ all normalize $\cala_{|V}$ (Example~\ref{ex:normal}, applied to the cocycle $\rho_G$). Since $(\cala_{|V},\rho_H)$ is tightly $B$-supported and $((\cala_i)_{|V},\rho_H)$ is tightly $B_i$-supported \cite[Lemma~4.15]{EH}, it follows from \cite[Lemma~3.8]{HH} that $B_i\subseteq B\times B^{\perp}$, so $B$ and $B_i$ commute. 

The subgroupoid $\caln_i$ normalizes $\cala_i$, and $((\cala_i)_{|V},\rho_H)$ is tightly $B_i$-supported. It thus follows from \cite[Lemma~3.8]{HH} (applied to the cocycle $\rho_H$) that there exists a conull Borel subset $V^*\subseteq V$ such that $(\caln_i)_{|V^*}\subseteq\rho_H^{-1}(N_H(B_i))_{|V^*}$. On the other hand, \cite[Lemma~3.8]{HH} (applied to the cocycle $\rho_G$) shows that for every measured subgroupoid $\calh$ of $\calg_{|V}$ that normalizes $(\cala_i)_{|V}$,  there exists a conull Borel subset $V^{**}\subseteq V$ such that $\calh_{|V^{**}}\subseteq (\caln_i)_{|V^{**}}$. Since $\rho_H^{-1}(N_H(B_i))_{|V}$ normalizes $(\cala_i)_{|V}=\rho_H^{-1}(B_i)_{|V}$ (Example~\ref{ex:normal}), it follows that, up to replacing $V^*$ by a further conull Borel subset of $V$, the reverse inclusion  $\rho_H^{-1}(N_H(B_i))_{|V^*}\subseteq (\caln_i)_{|V^*}$ also holds, so $(\caln_i)_{|V^*}=\rho_H^{-1}(N_H(B_i))_{|V^*}$.

We have $(\caln_1\cap\dots\cap\caln_k)_{|V^*}=\rho_G^{-1}(N_G(A_1)\cap\dots\cap N_G(A_k))_{|V^*}=\cala_{|V^*}$, which means that $\rho_H^{-1}(N_H(B_1)\cap\dots\cap N_H(B_k))_{|V^*}=\rho_H^{-1}(B)_{|V^*}$. Since $\rho_H$ is action-like, it follows that the inclusion $B\subseteq N_H(B_1)\cap\dots\cap N_H(B_k)$ has finite index. Since this is an inclusion of parabolic subgroups, we must have $B=N_H(B_1)\cap\dots\cap N_H(B_k)$, which shows that $B$ is strongly untransvectable.
\end{proof}

\section{A combinatorial rigidity statement}\label{sec:rigid}

Recall that the \emph{extension graph} of a right-angled Artin group $G$, denoted $\Gamma_G^e$, is the simple graph whose vertices are the parabolic subgroups of $G$ isomorphic to $\mathbb{Z}$, where two distinct vertices are adjacent if the corresponding parabolic subgroups commute \cite{KK}. The $G$-action by conjugation on its cyclic parabolic subgroups induces a $G$-action by graph automorphisms on $\Gamma_G^e$. Following \cite[Definition~9.3]{EH}, we define the \emph{untransvectable extension graph} $\Gamma_G^{ue}$ as the full subgraph of $\Gamma_G^e$ spanned by all untransvectable cyclic parabolic subgroups (notice that it is $G$-invariant).

\begin{prop}\label{prop:rigidity}
Let $G$ and $H$ be two clique-reduced right-angled Artin groups. Assume that $\Out(G)$ is finite, and $G$ is not cyclic. Also assume that 
\begin{itemize}
\item $\Gamma_H$ does not contain any non-abelian untransvectable equivalence class;
\item every untransvectable vertex of $\Gamma_H$ is strongly untransvectable. 
\end{itemize}
Then  $\Gamma_H^{ue}$ is isomorphic to $\Gamma_G^{ue}$ if and only if $H$ is isomorphic to a finite-index subgroup of $G$. 
\end{prop}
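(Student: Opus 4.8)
\emph{Setup and the easy direction.} Since $\Out(G)$ is finite, $G$ has no transvections, so every cyclic parabolic subgroup of $G$ is untransvectable; hence $\Gamma_G^{ue}=\Gamma_G^e$, and by Remark~\ref{rk:strongly-untransvectable} every cyclic parabolic of $G$ is in fact strongly untransvectable. Moreover the absence of transvections forces every equivalence class of vertices of $\Gamma_G$ to be a singleton, so $\Gamma_G$ trivially satisfies the two hypotheses imposed on $\Gamma_H$; this symmetry is what lets one transport combinatorial data across an isomorphism in either direction. For the implication ``$H$ isomorphic to a finite-index subgroup of $G$ $\Rightarrow\Gamma_H^{ue}\cong\Gamma_G^{ue}$'', the plan is simply to note that a finite-index subgroup is commensurable to $G$, hence measure equivalent to it, and then to invoke the measure-equivalence invariance of the untransvectable extension graph among clique-reduced RAAGs \cite[Corollary~9.4]{EH}. (A direct argument is also available: with $G$ transvection-free, the inclusion puts the cyclic parabolic subgroups of the two groups in adjacency-preserving bijection.)

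\emph{The hard direction: making the parabolic structure intrinsic.} Fix an abstract graph isomorphism $\psi\colon\Gamma_H^{ue}\xrightarrow{\ \sim\ }\Gamma_G^{ue}=\Gamma_G^e$. The strategy is to reconstruct, purely from the abstract graphs, enough group-theoretic data on each side to produce a ``developing'' map $\Gamma_H\to\Gamma_G$ realizing $H$ as one of the finite-index RAAG subgroups classified in \cite[Section~6]{Hua}. On the $G$ side this structure is canonical: $\Gamma_G^e$ carries its type map onto $V\Gamma_G$ and its $G$-action, cliques correspond to abelian parabolic subgroups, and, since $G$ is transvection-free and clique-reduced, the combinatorics of $\Gamma_G^e$ determine $G$ up to commensurability by the analysis of \cite{Hua}. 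On the $H$ side the two hypotheses are exactly what is needed to recover the analogous data from $\Gamma_H^{ue}$ alone. The strong-untransvectability assumption translates into the purely combinatorial statement that each vertex $B$ of $\Gamma_H^{ue}$ is the \emph{unique common neighbour of its link}: indeed, for cyclic parabolic subgroups commuting and normalizing coincide, so the untransvectable cyclic parabolics commuting with every element of $\lk_{\Gamma_H^{ue}}(B)$ are precisely the cyclic parabolics contained in the common normalizer, which equals $B$. Consequently the normalizer relations among untransvectable cyclic parabolics of $H$ — in particular the stars of vertices and the maximal abelian parabolics spanned by untransvectable vertices — are encoded in $\Gamma_H^{ue}$ and are preserved by $\psi$. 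The assumption that $\Gamma_H$ has no non-abelian untransvectable equivalence class removes the remaining source of ambiguity: by Remark~\ref{rk:na} it forbids collapsible non-abelian free untransvectable parabolic subgroups, whose defining vertices are themselves \emph{transvectable} and hence invisible in $\Gamma_H^{ue}$; their presence is exactly what allows $\Gamma_H^{ue}$ to coincide with $\Gamma_G^e$ without $H$ being commensurable to $G$, as in Example~\ref{ex:counterexample}.

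\emph{Assembling the embedding, and the main obstacle.} Since $\psi$ preserves adjacency it matches cliques with cliques, hence maximal abelian (untransvectable) parabolics of $H$ with those of $G$, and by the previous paragraph it also matches the recovered normalizer relations. Composing $\psi$ with the type map $V\Gamma_G^e\to V\Gamma_G$ then yields a candidate type map on the untransvectable vertices of $H$; the plan is to show it descends to a well-defined map from the untransvectable subgraph of $\Gamma_H$ to $\Gamma_G$ that is a local isomorphism on stars, to extend it across the transvectable vertices of $\Gamma_H$, and to verify that the result exhibits $\Gamma_H$ as a graph built from $\Gamma_G$ by the iterated ``gluing of copies along stars'' operation of \cite[Section~6]{Hua}, so that $H$ is identified with a finite-index RAAG subgroup of $G$ (the clique-reducedness of $H$ guaranteeing there are no spurious abelian or free collapsible pieces). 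The main obstacle is twofold, and is where the genuine work lies. First, $\psi$ is only an abstract graph isomorphism and is in no way equivariant for the $G$- and $H$-actions, so one must prove that the reconstructed type/quotient data is canonical, i.e. that the local developing map is globally consistent; this is the combinatorial rigidity heart of the argument and leans on the methods of \cite{Hua}. Second, $\Gamma_H^{ue}$ is blind to the transvectable vertices of $\Gamma_H$, so one must argue that these invisible vertices are forced by the untransvectable skeleton to complete $\Gamma_H$ into a finite-sheeted cover of $\Gamma_G$. Here the two hypotheses, tailored to match precisely the shape of finite-index subgroups of a transvection-free RAAG, are exactly what closes the gap.
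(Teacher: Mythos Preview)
Your proposal is not a proof but a strategy outline, and it leaves the central difficulty unresolved. In the hard direction you explicitly write that ``one must prove that the reconstructed type/quotient data is canonical'' and ``one must argue that these invisible vertices are forced by the untransvectable skeleton'', and you describe these as ``where the genuine work lies'' --- but you never do that work. Constructing a developing map $\Gamma_H\to\Gamma_G$ from an abstract, non-equivariant isomorphism $\psi$, and then extending it over the transvectable vertices of $\Gamma_H$, is a substantial undertaking with no details supplied; as written, nothing prevents the situation of Example~\ref{ex:counterexample} beyond a vague assertion that ``the two hypotheses \dots\ are exactly what closes the gap''.

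The paper avoids this entirely by a much cleaner route: rather than constructing the embedding directly, it shows by contradiction that the two hypotheses on $H$ force $H$ to be \emph{transvection-free}, and then invokes \cite[Theorem~1.2]{Hua} as a black box. Concretely, if $H$ had a transvectable vertex, then (using the first hypothesis) Lemma~\ref{lemma:combinatorial} produces a transvectable $u$ and an untransvectable $\hat u$ with $\lk_{\Gamma_H}(u)\subseteq\st_{\Gamma_H}(\hat u)$. One then proves (Claim~\ref{claim:proper}, where the strong-untransvectability hypothesis is used) that $\lk_{\Gamma_H^e}(u)\cap\Gamma_H^{ue}$ is a \emph{proper} subgraph of $\st_{\Gamma_H^{ue}}(\hat u)$ and is not equal to $\lk_{\Gamma_H^{ue}}(\hat u)$. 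By Lemma~\ref{lemma:disconnected} this set disconnects $\Gamma_H^{ue}$. Transporting through $\psi$ to $\Gamma_G^e$, one obtains a proper subgraph of some vertex star, distinct from the link, whose removal disconnects $\Gamma_G^e$ --- contradicting Lemma~\ref{lemma:proper-in-star}, which holds precisely because $|\Out(G)|<\infty$. So the argument is a connectivity/disconnectivity mismatch in extension graphs, not a direct reconstruction of the embedding.

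For the easy direction, your appeal to \cite[Corollary~9.4]{EH} works but is roundabout (and requires checking that $H$ is clique-reduced); the paper instead uses \cite[Theorem~5.3]{Hua} to get that $H$ is transvection-free, so $\Gamma_H^{ue}=\Gamma_H^e$, and then \cite[Theorem~1.2]{Hua} directly.
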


\begin{ex}\label{ex:counterexample}
We do not know whether the first assumption on $\Gamma_H$ can be dropped. But the second assumption cannot, as shown by the following example.

Let $G$ be a non-cyclic right-angled Artin group with $|\Out(G)|<+\infty$. Choose a vertex $v\in V\Gamma_G$ (notice that our assumptions on $G$ imply that $v$ is not isolated in $\Gamma_G$). Let $H_v$ be a right-angled Artin group that splits as $H_v=\mathbb{Z}\times H'_v$, where $H'_v$ is a right-angled Artin group with trivial center. Let $H$ be the graph product over $\Gamma_G$ where the vertex group associated with $v$ is $H_v$, and all other vertex groups are $\mathbb{Z}$. Then $\Gamma_H$ is obtained from $\Gamma_G$ by replacing the vertex $v$ by $\Gamma_{H_v}$, and joining all vertices of $\Gamma_{H_v}$ to all vertices of $\Gamma_G$ that are adjacent to $v$. Let $v_0\in V\Gamma_{H_v}$ be the vertex associated to the center of $H_v$; we also view $v_0$ as a vertex in $\Gamma_H$. Notice that $v_0$ is not strongly untransvectable, as $(\{v_0\}^{\perp})^{\perp}$ contains all vertices of $\Gamma_G$ adjacent to $v$.

The untransvectable vertices in $\Gamma_H$ are exactly $v_0$ and all vertices coming from $\Gamma_G$ and distinct from $v$.

Let $\theta_v:\mathbb{Z}\to H_v$ be a bijection sending the neutral element $0$ to the neutral element of $H_v$. Using normal forms for graph products as in \cite[Theorem~3.9]{Gre}, the bijection $\theta_v$ extends to a bijection $\theta:G\to H$.

We claim that the map $\theta_\ast:\Gamma_G^e\to\Gamma_H^{ue}$ sending $gG_wg^{-1}$ to $\theta(g)H_w\theta(g)^{-1}$ if $w\neq v$, and $gG_vg^{-1}$ to $\theta(g)H_{v_0}\theta(g)^{-1}$, is a graph isomorphism. Notice that this is well-defined, because if $h$ normalizes $G_w$ (resp.\ $G_v$), then $\theta(h)$ normalizes $H_w$ (resp.\ $H_{v_0}$) -- this uses that $H_{v_0}$ is central in $H_v$. It is bijective, using $\theta^{-1}$ to define the inverse, and it preserves adjacency and non-adjacency. 
\end{ex}

 In our proof of Proposition~\ref{prop:rigidity}, we will make use of the following lemma of the second-named author.

\begin{lemma}[{\cite[Lemma~6.2]{Hua} and \cite[Corollary~3.17(1)]{Hua2}}]\label{lemma:projection}
Let $G$ be a non-cyclic right-angled Artin group. Let $v\in V\Gamma_G^e$, and let $Z_v$ be the cyclic parabolic subgroup of $G$ associated with $v$. Then $Z_v$ preserves $V\Gamma_G^e\setminus\st_{\Gamma^e_G}(v)$, and there is a map $\theta:V\Gamma_G^e\setminus\st_{\Gamma_G^e}(v)\to Z_v$ such that
\begin{itemize}
    \item[(1)] $\theta$ is $Z_v$-equivariant (for the action of $Z_v$ by left translations on itself);
    \item[(2)] $\theta$ is constant on each connected component of $\Gamma_G^e\setminus\st_{\Gamma_G^e}(v)$;
    \item[(3)] if $|\Out(G)|<+\infty$, then for every $x\in Z_v$, the full subgraph of $\Gamma_G^e$ spanned by the vertices in $\theta^{-1}(x)$ is connected.
\end{itemize}
\end{lemma}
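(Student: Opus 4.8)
The plan is to realize $\theta$ as a nearest-edge projection to the axis of $Z_v$ inside a Bass--Serre tree, obtaining (1) and (2) from elementary tree geometry and isolating (3) as the only point where $|\Out(G)|<+\infty$ is actually needed.

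After conjugating we may assume $Z_v=\langle v_0\rangle$ is standard, with $v_0\in V\Gamma_G$. The normalizer of $Z_v$ is $G_{\st_{\Gamma_G}(v_0)}=\langle v_0\rangle\times G_{\lk_{\Gamma_G}(v_0)}$, and a cyclic parabolic subgroup lies in $\st_{\Gamma_G^e}(v)$ exactly when it commutes with $Z_v$, i.e.\ when it is contained in $G_{\st_{\Gamma_G}(v_0)}$. In particular $Z_v$ fixes $\st_{\Gamma_G^e}(v)$ pointwise, which already gives the preliminary assertion that $Z_v$ preserves the complement $V\Gamma_G^e\setminus\st_{\Gamma_G^e}(v)$. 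To define $\theta$ I would use the visual splitting $G=G_{\Gamma'}\ast_{G_{\lk_{\Gamma_G}(v_0)}}G_{\st_{\Gamma_G}(v_0)}$, where $\Gamma'=\Gamma_G\setminus\{v_0\}$, and let $T$ be its Bass--Serre tree with base vertex $\star=e\,G_{\st_{\Gamma_G}(v_0)}$. The element $v_0$ fixes $\star$ and permutes the edges incident to $\star$; these edges form a single free $Z_v$-orbit, identified $Z_v$-equivariantly with $Z_v$ via $v_0^k G_{\lk_{\Gamma_G}(v_0)}\mapsto v_0^k$. For a vertex $w\in V\Gamma_G^e\setminus\st_{\Gamma_G^e}(v)$ I set $\theta(w)$ to be the label of the first edge of the geodesic in $T$ from $\star$ to $\mathrm{Min}(Z_w)$. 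This is well defined: since $Z_w\nsubseteq G_{\st_{\Gamma_G}(v_0)}$, the subtree $\mathrm{Min}(Z_w)$ does not contain $\star$, so the geodesic from $\star$ to it leaves $\star$ through a unique incident edge.

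Conditions (1) and (2) are then immediate from the tree. For equivariance, $v_0$ fixes $\star$ and sends the edge labelled $v_0^k$ to the one labelled $v_0^{k+1}$, while $\mathrm{Min}(v_0 Z_w v_0^{-1})=v_0\cdot\mathrm{Min}(Z_w)$; hence $\theta(v_0\cdot w)=v_0\,\theta(w)$. For constancy on components, if $w$ and $w'$ are adjacent in $\Gamma_G^e$ then $Z_w$ and $Z_{w'}$ commute, and commuting isometries of a tree have intersecting minsets; the common point lies off $\star$ in a single half-tree, so the geodesics from $\star$ to $\mathrm{Min}(Z_w)$ and to $\mathrm{Min}(Z_{w'})$ share their first edge and $\theta(w)=\theta(w')$. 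Propagating this equality along edges shows that $\theta$ is constant on each connected component, which is (2).

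It remains to prove (3), which is where $|\Out(G)|<+\infty$ enters. By equivariance $\theta^{-1}(v_0^k)=v_0^k\cdot\theta^{-1}(e)$ and $v_0^k$ acts as a graph automorphism of $\Gamma_G^e$, so it suffices to show that $\theta^{-1}(e)$ spans a connected subgraph. Now $\theta^{-1}(e)$ consists of the cyclic parabolic subgroups whose minset lies in the half-tree beyond the edge $e\,G_{\lk_{\Gamma_G}(v_0)}$ (the one pointing toward $e\,G_{\Gamma'}$); it contains the \emph{core} $\{\langle w\rangle : w\in V\Gamma_G\setminus\st_{\Gamma_G}(v_0)\}$, whose induced subgraph in $\Gamma_G^e$ is exactly the full subgraph $\Gamma_G\setminus\st_{\Gamma_G}(v_0)$. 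Finiteness of $\Out(G)$ forbids partial conjugations, so no star separates $\Gamma_G$ and the core is connected. The main obstacle is then to connect an arbitrary vertex of $\theta^{-1}(e)$ to this core by a path staying inside $\theta^{-1}(e)$. I would do this by induction on the distance in $T$ from $\star$ to $\mathrm{Min}(Z_w)$ --- equivalently on the syllable length of a $g$ with $Z_w=g\langle u\rangle g^{-1}$ --- peeling off one standard generator of $g$ at a time and using the Kim--Koberda connectivity of the extension graph of the vertex group $G_{\Gamma'}$ together with the connectivity of $\Gamma_G\setminus\st_{\Gamma_G}(v_0)$ to route each elementary step through the core without leaving the fiber. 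This inductive connectivity argument is the crux of the lemma; everything preceding it is soft tree geometry.
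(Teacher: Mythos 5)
Your tree-theoretic treatment of everything except (3) is correct and, in fact, more self-contained than the paper's: the paper does not reconstruct $\theta$ at all, but simply quotes \cite[Lemma~6.2]{Hua} for a map satisfying (1) and (2). Your version works because every cyclic parabolic subgroup of $G$ is conjugate into $G_{\Gamma'}$ or $G_{\st_{\Gamma_G}(v_0)}$, hence elliptic in the visual splitting, so $\mathrm{Min}(Z_w)$ is a fixed subtree avoiding $\star$ whenever $Z_w\nsubseteq G_{\st_{\Gamma_G}(v_0)}$ (the centralizer of $v_0$), the edges at $\star$ form a free $Z_v$-orbit since $v_0$ is central in $\mathrm{Stab}(\star)$, and two commuting elliptics share a fixed point, so the first-edge projection is well defined, equivariant, and constant on components. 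Up to this point the proposal is a valid alternative route.

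However, (3) is not proved, and by your own admission it is the crux. ``I would do this by induction \ldots\ to route each elementary step through the core without leaving the fiber'' is a plan, not an argument, and the step it gestures at is exactly where the difficulty lies: the extension graph of the vertex group $G_{\Gamma'}$ contains many vertices that commute with $Z_v$ (e.g.\ $G_{\Gamma'}$-conjugates of generators in $\lk_{\Gamma_G}(v_0)$ that remain inside $G_{\st_{\Gamma_G}(v_0)}$), so Kim--Koberda connectivity of $\Gamma_{G_{\Gamma'}}^e$ \cite{KK} only produces paths that may run through $\st_{\Gamma_G^e}(v)$, i.e.\ leave every fiber of $\theta$; showing that such paths can be rerouted within a single fiber is precisely the content of \cite[Corollary~3.17(1)]{Hua2} (each $v$-tier is a single $v$-branch, via uniqueness of the $v$-peripheral subcomplex of a given height), which is the substantive input the paper invokes and which your sketch would have to reprove. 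Relatedly, you use only one of the two consequences of $|\Out(G)|<+\infty$: besides connectivity of $\Gamma_G\setminus\st_{\Gamma_G}(v_0)$ (absence of partial conjugations), the paper's derivation also needs that every vertex of $\lk_{\Gamma_G}(v_0)$ has a neighbor outside $\st_{\Gamma_G}(v_0)$ (absence of transvections, giving $\partial C=\lk_{\Gamma_G}(v_0)$); without this second ingredient there is no way to attach an arbitrary vertex of $\theta^{-1}(e)$ to your core. So the proposal as written has a genuine gap at (3), and the claim that ``everything preceding it is soft tree geometry'' is accurate --- the lemma's difficulty is concentrated exactly in the part left unproved.
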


\begin{rk}
A map $\theta$ satisfying (1) and (2) is constructed in \cite[Lemma~6.2]{Hua} (its equivariance is not stated explicitly, but follows from its construction). When $|\Out(G)|<+\infty$, the fact that it satisfies (3) is a consequence of \cite[Corollary~3.17(1)]{Hua2}. More precisely, in the terminology from \cite[Corollary~3.17]{Hua2}, $v$-branches are the connected components of the full subgraph of $\Gamma_G^e$ spanned by the vertices in $V\Gamma_G^e\setminus\st_{\Gamma_G^e}(v)$, and a $v$-tier is the full subgraph spanned by the vertices in some preimage of $\theta$ (see \cite[Definition~3.2]{Hua2}). In particular, a $v$-tier is a disjoint union of $v$-branches. Since $|\Out(G)|<+\infty$, for each vertex  $\bar v\in V\Gamma_G$, the complement $\Gamma_G\setminus \st_{\Gamma_G}(\bar v)$ is connected, and each vertex of $\lk_{\Gamma_G}(\bar v)$ is adjacent to a vertex in  $\Gamma_G\setminus \st_{\Gamma_G}(\bar v)$. Using the notations after \cite[Lemma~3.12]{Hua2}, this means that there is a single connected component $C$ of $\Gamma_G\setminus\st_{\Gamma_G}(\bar v)$, and that $\partial C=\lk_{\Gamma_G}(\bar v)$. Sticking to the terminology from \cite{Hua2}, the universal cover of the Salvetti complex of $G$ has a subcomplex $P_v$ isomorphic to $\mathbb R\times Q_v$, where $Q_v$ is isomorphic to the universal cover of the Salvetti complex of $G_{\lk_{\Gamma_G}(\bar v)}$. A \emph{$v$-peripheral subcomplex} of type $\partial C$ is then a subcomplex $K$ of $P_v$ of the form $\{x\}\times Q_v$ with $x\in\mathbb{Z}$, and there is only one of any given height $x$. In particular, for a given height $x$, there is a unique pair $(C,K)$ as in \cite[Corollary~3.17(1)]{Hua2}. It thus follows from \cite[Corollary~3.17(1)]{Hua2} that each $v$-tier (of a given height) contains exactly one $v$-branch. In other words $\theta^{-1}(x)$ (which is a $v$-tier) coincides with a single connected component (a $v$-branch) of the full subgraph of $\Gamma_G^e$ spanned by the vertices in $V\Gamma_G^e\setminus\st_{\Gamma_G^e}(v)$. This exactly means that (3) holds.
\end{rk}

\begin{lemma}\label{lemma:disconnected}
Let $G$ be a right-angled Artin group. Let $v\in V\Gamma_G^e$, and let $\Upsilon\subseteq\Gamma_G^e$ be a $G$-invariant full subgraph which is not contained in $\st_{\Gamma_G^e}(v)$.

Then $\Upsilon\setminus (\st_{\Gamma^e_G}(v)\cap\Upsilon)$ is disconnected.
\end{lemma}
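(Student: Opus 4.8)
The plan is to use the projection map $\theta$ provided by Lemma~\ref{lemma:projection}, but to lean only on its equivariance and its being locally constant, namely properties (1) and (2). Notably, property (3) is the sole place where $|\Out(G)|<+\infty$ enters, and I would not need it here, which is exactly why the present statement holds with no hypothesis on $\Out(G)$. Writing $Z_v$ for the infinite cyclic parabolic subgroup of $G$ associated with $v$, and setting $S := \Upsilon \setminus (\st_{\Gamma_G^e}(v) \cap \Upsilon)$, the hypothesis that $\Upsilon \nsubseteq \st_{\Gamma_G^e}(v)$ says precisely that $S$ is nonempty. One may assume $G$ is non-cyclic (otherwise $\Gamma_G^e$ is a single vertex and the hypothesis is vacuous), so that Lemma~\ref{lemma:projection} applies and furnishes a $Z_v$-equivariant map $\theta : V\Gamma_G^e \setminus \st_{\Gamma_G^e}(v) \to Z_v$ that is constant on each connected component of $\Gamma_G^e \setminus \st_{\Gamma_G^e}(v)$.

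The key step I would carry out is to show that $\theta(S)$ is all of $Z_v$. First, $S$ is $Z_v$-invariant: indeed $\Upsilon$ is $G$-invariant, hence invariant under the subgroup $Z_v$, and $V\Gamma_G^e \setminus \st_{\Gamma_G^e}(v)$ is $Z_v$-invariant by Lemma~\ref{lemma:projection}, so their intersection $S$ is $Z_v$-invariant. Consequently, for any $w \in S$ and any $x \in Z_v$ we have $xw \in S$ and $\theta(xw) = x\,\theta(w)$ by equivariance, which shows that the nonempty set $\theta(S)$ is closed under left translation by $Z_v$. Since $Z_v \cong \mathbb{Z}$ acts transitively on itself by left translation, this forces $\theta(S) = Z_v$; in particular $\theta(S)$ is infinite.

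To conclude, I would play the local constancy of $\theta$ against the conclusion $\theta(S) = Z_v$. Since $S \subseteq V\Gamma_G^e \setminus \st_{\Gamma_G^e}(v)$, any path inside $S$ is a path in $\Gamma_G^e \setminus \st_{\Gamma_G^e}(v)$, so each connected component of $S$ is contained in a single connected component of $\Gamma_G^e \setminus \st_{\Gamma_G^e}(v)$; by property (2), $\theta$ is therefore constant on each connected component of $S$. If $S$ were connected, $\theta(S)$ would be a single element of $Z_v$, contradicting that $\theta(S) = Z_v$ is infinite. Hence $S$ has at least two connected components, i.e.\ it is disconnected.

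There is no real obstacle once the argument is organized this way; the one thing to get right is the observation that equivariance alone forces the image of $S$ to sweep out the whole $\mathbb{Z}$-orbit, while local constancy would pin a connected set to a single value. The tension between these two facts is the entire content, and it bypasses any appeal to the $\Out$-finiteness hidden in property (3) of Lemma~\ref{lemma:projection}.
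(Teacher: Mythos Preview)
Your proof is correct and follows essentially the same approach as the paper: both use only properties~(1) and~(2) of Lemma~\ref{lemma:projection}, observing that $Z_v$-equivariance forces $\theta$ to take more than one value on the $Z_v$-invariant set $S$, while local constancy would force a single value on a connected set. The paper's version is marginally more economical---it simply picks one vertex $w\in S$ and notes that $w$ and $g_v\cdot w$ have distinct $\theta$-values, rather than showing $\theta(S)=Z_v$---but the underlying argument is identical.
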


\begin{proof}
The lemma is vacuously true if $G\simeq\mathbb{Z}$, so we will assume otherwise. Let $Z_v$ be the cyclic parabolic subgroup of $G$ associated with $v$, and let $g_v$ be a generator of $Z_v$.  By Lemma~\ref{lemma:projection}, there exists a $Z_v$-equivariant map $\theta:V\Gamma_G^e\setminus\st_{\Gamma_G^e}(v)\to Z_v$ (for the action of $Z_v$ by translations on itself) which is constant on each connected component of $\Gamma_G^e\setminus\st_{\Gamma_G^e}(v)$. Let $w\in V\Upsilon\setminus (\st_{\Gamma_G^e}(v)\cap\Upsilon)$. Then $w$ and $g_v\cdot w$ have distinct values under $\theta$, so they belong to different complementary components of $\st_{\Gamma_G^e}(v)$. In particular, in $\Upsilon$, they belong to distinct connected components of $\Upsilon\setminus (\st_{\Gamma_G^e}(v)\cap\Upsilon)$, which proves the lemma. 
\end{proof}

\begin{lemma}\label{lemma:proper-in-star}
Let $G$ be a right-angled Artin group with $|\Out(G)|<+\infty$. Then for every vertex $v\in V\Gamma_G^e$ and every full subgraph $X\subsetneq\st_{\Gamma_G^e}(v)$ with $X\neq\lk_{\Gamma_G^e}(v)$, the graph $\Gamma_G^e\setminus X$ is connected.
\end{lemma}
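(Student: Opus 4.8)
The plan is to reduce to a standard vertex, exploit the fiber structure coming from Lemma~\ref{lemma:projection}, and then show that the surviving link vertices glue all the fibers together.

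First I would use the $G$-action on $\Gamma_G^e$ by graph automorphisms (which preserves stars, links, and full subgraphs) to assume that $v=Z_{\bar v}$ is a \emph{standard} vertex, for some $\bar v\in V\Gamma_G$; this is harmless, since every vertex of $\Gamma_G^e$ is $G$-conjugate to a standard one and $\Gamma_G^e\setminus X$ is connected if and only if $\Gamma_G^e\setminus(g\cdot X)$ is. The case $G\simeq\mathbb Z$ is vacuous. Since $|\Out(G)|<+\infty$ and $G$ is non-cyclic, $\bar v$ cannot be adjacent to all other vertices (otherwise every other vertex $\bar u$ would satisfy $\lk_{\Gamma_G}(\bar u)\subseteq\st_{\Gamma_G}(\bar v)$, a transvection), so $\Gamma_G^e\setminus\st_{\Gamma_G^e}(v)\neq\emptyset$. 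By Lemma~\ref{lemma:projection}(3), its connected components are exactly the fibers $F_x=\theta^{-1}(x)$ for $x\in Z_v$, each of which is connected; moreover $Z_v$ fixes $\st_{\Gamma_G^e}(v)$ pointwise (every vertex of the star commutes with $Z_v$), and a generator $g_v$ of $Z_v$ translates $F_x$ onto $F_{g_vx}$.

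The key step is to show that \emph{every} vertex $w\in\lk_{\Gamma_G^e}(v)$ has a neighbor outside $\st_{\Gamma_G^e}(v)$. Since $w$ commutes with $Z_{\bar v}$, it lies in $G_{\st_{\Gamma_G}(\bar v)}=Z_{\bar v}\times G_{\lk_{\Gamma_G}(\bar v)}$, and as $w\neq v$ one may write $w=hZ_{\bar u}h^{-1}$ with $\bar u\in\lk_{\Gamma_G}(\bar v)$ and $h\in G_{\lk_{\Gamma_G}(\bar v)}$. Because $|\Out(G)|<+\infty$ forbids transvections, the vertex $\bar u$ admits a neighbor $\bar t\in\lk_{\Gamma_G}(\bar u)$ with $\bar t\notin\st_{\Gamma_G}(\bar v)$. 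Then $hZ_{\bar t}h^{-1}$ is adjacent to $w$; and since $h$ commutes with $Z_{\bar v}$, this vertex commutes with $Z_{\bar v}$ if and only if $\bar t\in\st_{\Gamma_G}(\bar v)$, which fails. Hence $hZ_{\bar t}h^{-1}$ is a neighbor of $w$ lying outside $\st_{\Gamma_G^e}(v)$, as desired.

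It then remains to assemble connectivity. In all cases $\lk_{\Gamma_G^e}(v)\setminus X\neq\emptyset$: if $v\notin X$ then $X\subseteq\lk_{\Gamma_G^e}(v)$ together with $X\neq\lk_{\Gamma_G^e}(v)$ forces a link vertex to survive, while if $v\in X$ then $X\subsetneq\st_{\Gamma_G^e}(v)$ forces the same. Fix $w_0\in\lk_{\Gamma_G^e}(v)\setminus X$. By the key step $w_0$ has a neighbor in some fiber, and since $g_v^k$ fixes $w_0$ while translating the fibers, $w_0$ is adjacent to a vertex of \emph{every} fiber. Therefore $w_0$, together with all the fibers, lies in a single connected component $\mathcal C$ of $\Gamma_G^e\setminus X$. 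Every other surviving link vertex lies in $\mathcal C$ for the same reason, and if $v$ survives then it is adjacent to $w_0$ and hence also lies in $\mathcal C$. As the surviving vertices are precisely the fibers together with $\st_{\Gamma_G^e}(v)\setminus X$, they all belong to $\mathcal C$, so $\Gamma_G^e\setminus X$ is connected. I expect the key step to be the main obstacle: the fiber description is immediate from Lemma~\ref{lemma:projection}(3) and the final gluing is routine bookkeeping, but transporting the elementary defining-graph statement (no transvection forces each link vertex to have a neighbor outside the star) up to the extension graph requires controlling the conjugator $h$ and checking that the resulting neighbor genuinely leaves $\st_{\Gamma_G^e}(v)$.
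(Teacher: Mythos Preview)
Your proof is correct and follows essentially the same route as the paper's: pick a surviving link vertex $w_0$, use the no-transvection hypothesis to find a neighbor of $w_0$ outside $\st_{\Gamma_G^e}(v)$, then translate that neighbor by powers of $g_v$ (which fix $w_0$) to hit every fiber $\theta^{-1}(x)$, and conclude by noting that every remaining star vertex also has a neighbor outside the star. The only cosmetic differences are that you first conjugate $v$ to a standard vertex (harmless but unnecessary, since Lemma~\ref{lemma:projection} is already stated for arbitrary $v$) and that you spell out the control on the conjugator $h\in G_{\lk_{\Gamma_G}(\bar v)}$ more explicitly than the paper does.
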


\begin{proof}
The lemma is vacuously true if $G\simeq\mathbb{Z}$, so we will assume otherwise. Let $Z_v\subseteq G$ be the cyclic parabolic subgroup associated to $v$, and let $g_v$ be a generator of $Z_v$. Since $|\Out(G)|<+\infty$, Lemma~\ref{lemma:projection} provides us with a map $\theta:V\Gamma_G^e\setminus\st_{\Gamma_G^e}(v)\to Z_v$ such that
\begin{itemize}
    \item[(1)] $\theta$ is $Z_v$-equivariant (for the action of $Z_v$ on itself by left translations);
    \item[(2)] for every $x\in Z_v$, the full subgraph of $\Gamma_G^e$ spanned by the vertices in $\theta^{-1}(x)$ is connected.
\end{itemize}
Our assumptions on $X$ ensure that we can find a vertex  $w\in\lk_{\Gamma_G^e}(v)\setminus X$. Let $C$ be the connected component of $\Gamma_G^e\setminus X$ that contains $w$. Let $\bar v, \bar w$ be the projections of $v,w$ to $\Gamma_G$, i.e.\ the types of the corresponding cyclic parabolic subgroups. Since $|\Out(G)|<+\infty$, the link of $\bar w$ in $\Gamma_G$ contains a vertex which is not adjacent to $\bar v$. Therefore in $\Gamma_G^e$, we can find a vertex $u$ adjacent to $w$ and not contained in $\st_{\Gamma_G^e}(v)$. As $n$ varies in $\mathbb{Z}$, the vertices $g_v^n\cdot u$ (corresponding to the conjugates $g_v^nZ_ug_v^{-n}$) are all adjacent to $w$ and not in $\st_{\Gamma_G^e}(v)$, in particular they belong to $C$. By (1) above, the values of $\theta(g_v^n\cdot u)$ as $n$ varies in $\mathbb{Z}$ cover all $Z_v$, so by (2) every element of $V\Gamma_G^e\setminus\st_{\Gamma_G^e}(v)$ belongs to $C$. Since every element of $\st_{\Gamma_G^e}(v)\setminus X$ is in the same connected component as some vertex outside $\st_{\Gamma_G^e}(v)$, we deduce that $\Gamma_G^e\setminus X=C$. 
\end{proof}

\begin{lemma}\label{lemma:combinatorial}
Let $H$ be a clique-reduced right-angled Artin group, and assume that $\Gamma_H$ contains a transvectable vertex. Then one of the following two holds:
\begin{itemize}
    \item[(1)] $\Gamma_H$ contains an untransvectable non-abelian equivalence class of vertices;
    \item[(2)] $\Gamma_H$ contains two vertices $u,\hat u$, with $\hat u$ untransvectable, such that $\lk_{\Gamma_H}(u)\subseteq\st_{\Gamma_H}(\hat u)$.
\end{itemize}
\end{lemma}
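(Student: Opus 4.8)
The plan is to argue entirely within the combinatorics of the Charney--Vogtmann preorder $\le$ on $V\Gamma_H$ (recall $v\le w$ iff $\lk_{\Gamma_H}(v)\subseteq\st_{\Gamma_H}(w)$) and the partial order it induces on $\sim$-equivalence classes, using that a class is untransvectable precisely when it is maximal for this order. The single place where the hypothesis that $H$ is clique-reduced enters is a preliminary observation, which I would establish first: \emph{in a clique-reduced RAAG, every abelian equivalence class consists of a single vertex.} Indeed, let $\Theta$ be an abelian equivalence class; its vertices are pairwise adjacent, so $\Theta$ spans a complete full subgraph. I claim $\Theta$ is collapsible. Given $x,y\in\Theta$, from $x\sim y$ we get $\lk_{\Gamma_H}(x)\subseteq\st_{\Gamma_H}(y)$ and $\lk_{\Gamma_H}(y)\subseteq\st_{\Gamma_H}(x)$; since any $z\in\st_{\Gamma_H}(x)\setminus\Theta$ lies in $\lk_{\Gamma_H}(x)$, it lies in $\st_{\Gamma_H}(y)\setminus\Theta$, and symmetrically, so $\st_{\Gamma_H}(x)\cap(\Gamma_H\setminus\Theta)=\st_{\Gamma_H}(y)\cap(\Gamma_H\setminus\Theta)$. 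Thus $\Theta$ is a collapsible complete subgraph, and clique-reducedness forces $|V\Theta|=1$.

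With this in hand, the argument becomes a climb in a finite poset. Starting from the given transvectable vertex $u$, I would consider its equivalence class $[u]$ and choose a maximal equivalence class $\hat\Theta$ with $[u]\le\hat\Theta$, which exists since $\Gamma_H$ is finite. Such a $\hat\Theta$ is untransvectable by the maximality criterion recalled above. I first dispose of the degenerate possibility $[u]=\hat\Theta$: this would make $[u]$ itself maximal, hence untransvectable; were $[u]$ abelian it would be a singleton by the preliminary observation, and a maximal singleton class is untransvectable, contradicting that $u$ is transvectable. So in this degenerate case $[u]=\hat\Theta$ is necessarily non-abelian, which is exactly an untransvectable non-abelian equivalence class and yields conclusion~(1).

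In the remaining case $[u]<\hat\Theta$ (strict), I split on the type of $\hat\Theta$. If $\hat\Theta$ is non-abelian, then it is an untransvectable non-abelian equivalence class, giving~(1). If $\hat\Theta$ is abelian, the preliminary observation makes it a singleton $\{\hat u\}$ with $\hat u$ untransvectable; since $[u]<\hat\Theta$ we have $u\le\hat u$, i.e.\ $\lk_{\Gamma_H}(u)\subseteq\st_{\Gamma_H}(\hat u)$, and $u\neq\hat u$ because distinct equivalence classes are disjoint. This is precisely conclusion~(2), with the pair $(u,\hat u)$.

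I expect the main obstacle to be the preliminary observation rather than the poset climb: identifying that an abelian equivalence class is collapsible, and hence excluded by clique-reducedness unless it is a single vertex, is the conceptual crux that makes the two conclusions exhaustive. The only other point requiring care is the bookkeeping around the degenerate case $[u]=\hat\Theta$, which must be handled to guarantee that the two vertices $u$ and $\hat u$ produced in conclusion~(2) are genuinely distinct.
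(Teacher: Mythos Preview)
Your proof is correct and follows essentially the same approach as the paper: both arguments climb in the Charney--Vogtmann preorder and use clique-reducedness to force any non-singleton equivalence class to be non-abelian. The only organizational difference is that the paper first replaces the given transvectable vertex by one that is $<$-maximal among transvectable vertices and then splits on whether an untransvectable vertex lies above it, whereas you keep the original $u$ and climb to a maximal equivalence class $\hat\Theta\ge[u]$ before splitting on the type of $\hat\Theta$; the underlying dichotomy is identical.
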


\begin{proof}
Let $u\in V\Gamma_H$ be a transvectable vertex such that there is no transvectable vertex $u'$ of $\Gamma_H$ such that $u<u'$ (the relation $<$ is defined on page~\pageref{equivalence-classes}).

We first assume that there exists an untransvectable vertex $\hat u$ with $u\le \hat u$. This means that $\lk_{\Gamma_H}(u)\subseteq\st_{\Gamma_H}(\hat u)$, showing that (2) holds.

We now assume that there is no untransvectable vertex $\hat u$ with $u\le \hat u$. Since $u$ is transvectable, there exists a vertex $w$ with $u\le w$, that is to say such that $\lk_{\Gamma_H}(u)\subseteq \st_{\Gamma_H}(w)$, so necessarily $w$ is transvectable. The maximality property defining $u$ thus ensures that $u\sim w$. Since $H$ is clique-reduced, the equivalence class $[u]$ of $u$ is non-abelian. And it is untransvectable, because there is no vertex $w'$ of $\Gamma_H$ (either transvectable or untransvectable) with $u<w'$. This shows that (1) holds.
\end{proof}

We are now in position to prove the main proposition of the section.

\begin{proof}[Proof of Proposition~\ref{prop:rigidity}]
We first assume that $H$ is isomorphic to a finite-index subgroup of $G$. By \cite[Theorem~5.3]{Hua}, the group $H$ is also transvection-free, so $\Gamma_H^{ue}=\Gamma_H^e$. The fact that $\Gamma_G^e$ and $\Gamma_H^{ue}$ are isomorphic thus follows from \cite[Theorem~1.2]{Hua}.

Conversely, we now assume that $\Gamma_H^{ue}$ is isomorphic to $\Gamma_G^e$, and aim to prove that $H$ embeds as a finite-index subgroup of $G$. If $H$ is transvection-free, then this follows from \cite[Theorem~1.2]{Hua}. 

We now assume that $H$ is not transvection-free (i.e.\ $\Gamma_H$ contains a transvectable vertex), and we aim for a contradiction. Since by assumption $\Gamma_H$ does not contain any untransvectable non-abelian equivalence class,  Lemma~\ref{lemma:combinatorial} ensures that $\Gamma_H$ contains a transvectable vertex $u$ and an untransvectable vertex $\hat{u}$ with $\lk_{\Gamma_H}(u)\subseteq\st_{\Gamma_H}(\hat{u})$. 

We now view $u$ and $\hat{u}$ as vertices of $\Gamma_H^e$, by identifying these vertices with the corresponding cyclic parabolic subgroups $H_u$ and $H_{\hat{u}}$. 

\begin{claim}\label{claim:proper}
The following two hold:
\begin{itemize}
\item $\lk_{\Gamma_H^e}(u)\cap\Gamma_H^{ue}\subsetneq\st_{\Gamma_H^e}(\hat u)\cap\Gamma_H^{ue}$, 
\item $\lk_{\Gamma_H^e}(u)\cap\Gamma_H^{ue}\neq\lk_{\Gamma_H^e}(\hat u)\cap\Gamma_H^{ue}$. 
\end{itemize}
\end{claim}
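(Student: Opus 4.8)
The plan is to reduce the two bullets to a single set-inclusion together with a single non-inclusion. Throughout, write $L_u=\lk_{\Gamma_H^e}(u)\cap\Gamma_H^{ue}$, $S_{\hat u}=\st_{\Gamma_H^e}(\hat u)\cap\Gamma_H^{ue}$ and $L_{\hat u}=\lk_{\Gamma_H^e}(\hat u)\cap\Gamma_H^{ue}$, so that trivially $L_{\hat u}\subseteq S_{\hat u}$. I claim it suffices to prove that $L_u\subseteq S_{\hat u}$ and that $L_{\hat u}\nsubseteq L_u$. Indeed, given these, any vertex of $L_{\hat u}\setminus L_u$ lies in $S_{\hat u}\setminus L_u$, so the inclusion $L_u\subseteq S_{\hat u}$ is strict (first bullet); and $L_{\hat u}\nsubseteq L_u$ forbids the equality $L_u=L_{\hat u}$ (second bullet).

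For the inclusion, I would start from an untransvectable cyclic parabolic subgroup $z\in L_u$ with $z\neq H_u$. Since $z$ commutes with $H_u$, we have $z\subseteq C_H(H_u)=H_{\st_{\Gamma_H}(u)}=H_u\times H_{\lk_{\Gamma_H}(u)}$, so by \cite[Proposition~2.2(2)]{CCV} one may write $z=hH_vh^{-1}$ with $h\in H_{\st_{\Gamma_H}(u)}$ and $v\in\st_{\Gamma_H}(u)$. The case $v=u$ is impossible, as $H_u$ is central in $H_{\st_{\Gamma_H}(u)}$ and would give $z=H_u$; hence $v\in\lk_{\Gamma_H}(u)$. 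Writing $h=u^kh'$ with $h'\in H_{\lk_{\Gamma_H}(u)}$, the decisive point is that $u$ commutes with all of $H_{\lk_{\Gamma_H}(u)}$ and with $H_v$ (because $v\in\lk_{\Gamma_H}(u)$), so conjugation by $u^k$ is trivial and $z=h'H_vh'^{-1}\subseteq H_{\lk_{\Gamma_H}(u)}$. The transvection hypothesis $\lk_{\Gamma_H}(u)\subseteq\st_{\Gamma_H}(\hat u)$ then yields $z\subseteq H_{\st_{\Gamma_H}(\hat u)}=C_H(H_{\hat u})$, i.e.\ $z$ commutes with $H_{\hat u}$; since $z\in\Gamma_H^{ue}$, this places $z$ in $S_{\hat u}$.

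For the non-inclusion $L_{\hat u}\nsubseteq L_u$, this is exactly where the strong untransvectability of $\hat u$ is used (and must be, in view of Example~\ref{ex:counterexample}). Assume for contradiction that $L_{\hat u}\subseteq L_u$. Then every $P\in L_{\hat u}$ commutes with $H_u$, so $H_u\subseteq C_H(P)\subseteq N_H(P)$, and therefore $H_u$ lies in the common normalizer $\bigcap_{P\in L_{\hat u}}N_H(P)$ of all untransvectable cyclic parabolic subgroups commuting with $H_{\hat u}$. As $\hat u$ is untransvectable, the hypothesis makes it strongly untransvectable, so this common normalizer is exactly $H_{\hat u}$; thus $H_u\subseteq H_{\hat u}$. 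Both groups being cyclic parabolic subgroups isomorphic to $\mathbb Z$, this forces $u=\hat u$, contradicting that $u$ is transvectable while $\hat u$ is not.

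The only genuinely delicate point is the inclusion step: one has to notice that after putting $z$ in standard form inside the direct product $H_u\times H_{\lk_{\Gamma_H}(u)}$, the $H_u$-part of the conjugator acts trivially (precisely because $u$ commutes with $\lk_{\Gamma_H}(u)$), so that $z$ actually lands in $H_{\lk_{\Gamma_H}(u)}$ and the relation $\lk_{\Gamma_H}(u)\subseteq\st_{\Gamma_H}(\hat u)$ can be applied verbatim. Everything else is formal, with the strong untransvectability of $\hat u$ supplying exactly the rigidity that separates $L_u$ from both $L_{\hat u}$ and $S_{\hat u}$.
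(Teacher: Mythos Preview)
Your proof is correct and considerably more direct than the paper's. The inclusion $L_u\subseteq S_{\hat u}$ you establish is exactly the paper's opening observation (phrased there as $\lk_{\Gamma_H^e}(u)\subseteq\st_{\Gamma_H^e}(\hat u)$), and your explicit reduction of the conjugator into $H_{\lk_{\Gamma_H}(u)}$ is a clean way to see it. For the non-inclusion $L_{\hat u}\nsubseteq L_u$, you go straight to the point: the assumption-for-contradiction says precisely that every untransvectable cyclic parabolic commuting with $H_{\hat u}$ (other than $H_{\hat u}$) commutes with $H_u$, so $H_u$ lies in their common normalizer, and strong untransvectability finishes. The paper instead takes a lengthy detour: it builds a join decomposition $\lk_{\Gamma_H}(\hat u)=\Lambda_1\circ\Lambda_2$ via connected components of the opposite graph of $\lk(\hat u)\cap\lk(u)$, verifies this join using normal forms, and then shows every untransvectable cyclic parabolic commuting with $H_{\hat u}$ has the shape $hH_wh^{-1}$ with $h\in H_{\Lambda_1}$, $w\in V\Lambda_1$---only to conclude that these commute with $H_u$, which was the hypothesis. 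Your shortcut legitimately bypasses all of this.

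One small imprecision worth flagging: the common normalizer in the definition of ``strongly untransvectable'' runs over \emph{all} untransvectable cyclic parabolics commuting with $H_{\hat u}$, which literally includes $H_{\hat u}$ itself. So to place $H_u$ inside that intersection you implicitly also need $H_u\subseteq N_H(H_{\hat u})$, i.e.\ $u$ and $\hat u$ adjacent; this is not forced by $\lk_{\Gamma_H}(u)\subseteq\st_{\Gamma_H}(\hat u)$ alone. The paper's final sentence has the same elision, so this is not a point where your argument falls short of the paper's---but it would be worth a line to address.
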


Before proving Claim~\ref{claim:proper}, let us first explain how the proposition follows from this claim.  Since $G$ is not cyclic, $\Gamma_G^e$ is not contained in the star of one vertex, so neither is $\Gamma_H^{ue}$. We can therefore apply Lemma~\ref{lemma:disconnected} to the right-angled Artin group $H$, to the vertex $u$, with $\Upsilon=\Gamma_H^{ue}$. This shows that  $\st_{\Gamma_H^e}(u)\cap\Gamma_H^{ue}$ disconnects $\Gamma_H^{ue}$. Through the isomorphism $\Gamma_H^{ue}\simeq\Gamma_G^e$, we now think of $\st_{\Gamma_H^e}(u)\cap\Gamma_H^{ue}$ as disconnecting $\Gamma_G^e$. On the other hand Claim~\ref{claim:proper} ensures that $\st_{\Gamma_H^e}(u)\cap\Gamma_H^{ue}$ (which is equal to $\lk_{\Gamma_H^e}(u)\cap\Gamma_H^{ue}$ because $u\notin V\Gamma_H^{ue}$) is a proper subgraph of $\st_{\Gamma_G^e}(\hat u)$, and not equal to $\lk_{\Gamma_G^e}(\hat u)$. This is a contradiction to Lemma~\ref{lemma:proper-in-star}, and completes our proof of the proposition.

Therefore, there only remains to prove the above claim.

\begin{proof}[Proof of Claim~\ref{claim:proper}]
We first observe that $\lk_{\Gamma_H^e}(u)\subseteq\st_{\Gamma_H^e}(\hat u)$. Indeed, every cyclic parabolic subgroup of $H$ commuting with $u$ must be contained in $H_u\times H_u^{\perp}$, and vertices in $\lk_{\Gamma_H^e}(u)$ are exactly the cyclic parabolic subgroups contained in $H_u^{\perp}$. Since $H_u^{\perp}\subseteq H_{\hat u}\times H_{\hat u}^{\perp}$, they all commute with $H_{\hat u}$, showing that $\lk_{\Gamma_H^e}(u)\subseteq\st_{\Gamma_H^e}(\hat u)$. 

In view of the above, we only need to prove that $\Gamma_H^{ue}$ contains a vertex in $\lk_{\Gamma^e_H}(\hat u)\setminus\lk_{\Gamma^e_H}(u)$. Arguing towards a contradiction, let us assume that it does not. 

We will now construct two full subgraphs $\Lambda_1,\Lambda_2\subseteq\lk_{\Gamma_H}(\hat u)$ such that $\lk_{\Gamma_H}(\hat u)=\Lambda_1\circ\Lambda_2$, with the following properties: every untransvectable vertex of $\Gamma_H$ which is contained in $\lk_{\Gamma_H}(\hat u)$ belongs to $\Lambda_1$, and $u$ is adjacent to every vertex in $\Lambda_1$. 

For this, let $\Lambda'_1=\lk_{\Gamma_H}(\hat u)\cap\lk_{\Gamma_H}(u)$ and $\Lambda'_2=\lk_{\Gamma_H}(\hat u)\setminus \lk_{\Gamma_H}(u)$ (note in particular that if $u$ and $\hat u$ are adjacent, then $u$ lies in $\Lambda'_2$). Let $\mathsf{U}$ be the set of all untransvectable vertices of $\Gamma_H$ that are contained in $\lk_{\Gamma_H}(\hat u)$. We notice that $\mathsf{U}\subseteq\Lambda'_1$: indeed otherwise, there would exist a vertex in $\mathsf{U}$ lying in $\lk_{\Gamma_H^e}(\hat u)\setminus\lk_{\Gamma_H^e}(u)$, contradicting our assumption. 

Now we define $\Lambda_1$ and $\Lambda_2$.
Denote by $(\Lambda'_1)^{\mathrm{op}}$ the \emph{opposite graph} of $\Lambda'_1$: it has the same vertex set as $\Lambda'_1$, and two vertices are adjacent in $(\Lambda'_1)^{\mathrm{op}}$ if and only if they are non-adjacent in $\Lambda'_1$. Note that $(\Lambda'_1)^{\mathrm{op}}$ might have multiple components.
Let $\Lambda_1$ be the full subgraph of $\Lambda'_1$ spanned by all vertices of $\Lambda'_1$ that can be connected to a vertex in $\mathsf{U}$ via an edge path in $(\Lambda'_1)^{\mathrm{op}}$.
We let $\Lambda_2$ be the full subgraph of $\Gamma$ spanned by $\Lambda'_2$ and all vertices in $\Lambda'_1\setminus \Lambda_1$. In particular $\lk_{\Gamma_H}(\hat u)$ is spanned by $\Lambda_1$ and $\Lambda_2$. 

\medskip

We claim that $\lk_{\Gamma_H}(\hat u)=\Lambda_1\circ\Lambda_2$. Indeed, let $x\in V\Lambda_1$ and $y\in V\Lambda_2$; we aim to prove that $x$ and $y$ are adjacent in $\Gamma_H$. Indeed:

$\bullet$ If $y=u$, then $x$ is adjacent to $y$ by definition of $\Lambda'_1$.

$\bullet$ If $y$ belongs to $\Lambda'_1\setminus \Lambda_1$, then it is adjacent to $x$ (otherwise, $y$ and $x$ are in the same path component of $(\Lambda'_1)^{\mathrm{op}}$).

$\bullet$ We finally assume that $y$ belongs to $V\Lambda'_2$ and is distinct from $u$, and assume towards a contradiction that $y$ and $x$ are not adjacent. By definition of $\Lambda_1$, there is a sequence of vertices $x=x_h,x_{h-1},\dots,x_0$ in $\Lambda_1$, where each $x_i$ is not adjacent to $x_{i-1}$ and $x_0\in \mathsf{U}$. For every $i\in\{0,\dots,h\}$, let $g_i$ be a generator of the cyclic subgroup $H_{x_i}$ associated to $x_i$, and let $g_y$ be a generator of the cyclic subgroup $H_y$ associated to $y$. Consider the cyclic group \[Z=g_yg_hg_{h-1}\dots g_2g_1 H_{x_0} g_1^{-1}g_2^{-1}\dots g_{h-1}^{-1}g_h^{-1}g_y^{-1},\] which is an untransvectable cyclic parabolic subgroup. Note that  any two consecutive letters appearing in this word correspond to non-adjacent vertices, and $u$ is non-adjacent to $y$ (as $y\in \Lambda'_2=\lk_{\Gamma_H}(\hat u)\setminus \lk_{\Gamma_H}(u)$). By considering the normal form in right-angled Artin groups (see \cite[Theorem~3.9]{Gre}, also \cite[Section~3]{HM}), we know that $Z_u$ does not commute with $Z$. On the other hand, since all vertices $x_i$ and $y$ belong to $\st_{\Gamma_H}(\hat u)$, the parabolic subgroup $Z_{\hat u}$ commutes with $Z$. Therefore $Z$ represents an untransvectable cyclic parabolic subgroup in $\lk_{\Gamma_H^e}(\hat u)\setminus\lk_{\Gamma_H^e}(u)$, a contradiction.
    
This proves our claim that $\lk_{\Gamma_H}(\hat u)=\Lambda_1\circ\Lambda_2$. And by construction, all vertices in $\mathsf{U}$ are in $\Lambda_1$, and $u$ is adjacent to all vertices in $\Lambda_1$.

\medskip

We now claim that all untransvectable cyclic parabolic subgroups that commute with $H_{\hat u}$, and distinct from $H_{\hat u}$, are of the form $hH_wh^{-1}$ with $h\in H_{\Lambda_1}$ and $w\in V\Lambda_1$. Indeed, let $P$ be such a subgroup, then $P$ is contained in the normalizer of $H_{\hat u}$, which is equal to  $H_{\hat u}\times H_{\hat u}^\perp=H_{\hat u}\times H_{\Lambda_1}\times H_{\Lambda_2}$. The type of $P$ is an untransvectable vertex $w$ adjacent to $\hat u$, so it belongs to $\Lambda_1$. So there exists $g\in H_{\hat u}\times H_{\hat u}^\perp$ such that $P=gH_wg^{-1}$ (the fact that $g$ can be chosen in $H_{\hat u}\times H_{\hat u}^{\perp}$ is a consequence of \cite[Proposition~2.2(2)]{CCV}). The above splitting of $H_{\hat u}\times H_{\hat u}^\perp$ implies that we can assume $g\in H_{\Lambda_1}$.

In particular, all untransvectable cyclic parabolic subgroups that commute with $H_{\hat u}$ also commute with $H_u$. This contradicts the fact that $\hat u$ is strongly untransvectable. This contradiction completes our proof.
\end{proof}
 \phantom\qedhere
\end{proof}

\section{Measure equivalence and orbit equivalence classification}\label{sec:end}

We now complete the proof of our main measure equivalence classification theorem (Theorem~\ref{theointro:main} from the introduction), whose statement we now recall.

\begin{theo}\label{theo:me-classification}
Let $G$ and $H$ be right-angled Artin groups, with $|\Out(G)|<+\infty$.

The groups $G$ and $H$ are measure equivalent if and only if there exists a finite-index right-angled Artin subgroup $G^0\subseteq G$ such that, denoting by $\Lambda$ the defining graph of $G^0$, the group $H$ is a graph product of infinite finitely generated free abelian groups over $\Lambda$.
\end{theo}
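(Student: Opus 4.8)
The plan is to prove both directions by combining the building blocks assembled earlier in the paper with the three known orbit/measure equivalence constructions. For the \emph{if} direction, suppose $H$ is a graph product of infinite finitely generated free abelian groups over $\Lambda$, the defining graph of a finite-index RAAG subgroup $G^0 \subseteq G$. Then $G^0$ has finite index in $G$, hence $G^0$ is measure equivalent to $G$ (finite-index inclusion is even a special case of orbit equivalence up to a scaling of the fundamental domain). Next, by the Dye/Ornstein--Weiss theorem each infinite abelian vertex group $\mathbb{Z}^{n}$ is orbit equivalent to $\mathbb{Z}$, and by \cite[Proposition~4]{HH} orbit equivalence passes from vertex groups to the graph product over the same graph; so $H$ is orbit equivalent to the RAAG $G_\Lambda = G^0$. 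Chaining these gives $H \sim_{\mathrm{ME}} G^0 \sim_{\mathrm{ME}} G$, as required. This direction is essentially a bookkeeping argument citing the two construction phenomena from the introduction.

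For the \emph{only if} direction, assume $G$ and $H$ are measure equivalent. First I would reduce to the clique-reduced case: every RAAG is orbit equivalent to a clique-reduced one (collapsing collapsible cliques as in Remark~\ref{rk:clique-reduced} only changes the group by a graph product of abelian vertex groups, which is an orbit equivalence), so replacing $H$ by its clique-reduction $H^{\mathrm{cr}}$ preserves measure equivalence, and it suffices to recover $H^{\mathrm{cr}}$ as a finite-index subgroup of $G$ — the abelian thickening of its vertex groups is then exactly the statement we want. Note that $G$ is automatically clique-reduced and transvection-free since $|\Out(G)|<+\infty$. The heart of the argument is then to invoke the invariance results of the paper: because $G$ has $|\Out(G)|<+\infty$, both hypotheses of Proposition~\ref{prop:rigidity} hold for $G$ (there are no non-abelian untransvectable equivalence classes and, by Remark~\ref{rk:strongly-untransvectable}, every untransvectable cyclic parabolic subgroup is strongly untransvectable). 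By Propositions~\ref{prop:non-abelian-class} and~\ref{prop:strongly-untransvectable}, these two conditions are measure-equivalence invariants, so $H^{\mathrm{cr}}$ satisfies them too.

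With both combinatorial hypotheses verified for $H^{\mathrm{cr}}$, I would then feed in the measure-equivalence invariance of the untransvectable extension graph: by \cite[Corollary~9.4]{EH} the untransvectable extension graphs $\Gamma_G^{ue}$ and $\Gamma_{H^{\mathrm{cr}}}^{ue}$ are isomorphic, and since $G$ is transvection-free we have $\Gamma_G^{ue}=\Gamma_G^e$. Proposition~\ref{prop:rigidity} now applies directly (treating the degenerate case $G \cong \mathbb{Z}$ separately, where both groups are amenable and the classification is immediate from the Ornstein--Weiss theorem) and yields that $H^{\mathrm{cr}}$ embeds as a finite-index RAAG subgroup of $G$. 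Setting $G^0 = H^{\mathrm{cr}}$ and $\Lambda = \Gamma_{H^{\mathrm{cr}}}$, the original $H$ is by construction a graph product of infinite finitely generated free abelian groups over $\Lambda$, completing the proof.

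The main obstacle, and the step requiring the most care, is the verification that the clique-reduction $H^{\mathrm{cr}}$ genuinely inherits \emph{both} hypotheses of Proposition~\ref{prop:rigidity} and that the extension-graph isomorphism survives the reduction — this is precisely where Propositions~\ref{prop:non-abelian-class} and~\ref{prop:strongly-untransvectable} together with \cite[Corollary~9.4]{EH} are doing the real work, and one must check that passing from $H$ to $H^{\mathrm{cr}}$ (which is an orbit equivalence, hence a measure equivalence) does not disturb any of these invariants. Once the rigidity input of Proposition~\ref{prop:rigidity} is available, the conclusion is almost formal; the delicate part is assembling its hypotheses from the measure-equivalence assumption alone.
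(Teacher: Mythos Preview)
Your proposal is correct and follows essentially the same route as the paper's proof: the ``if'' direction via finite-index plus \cite[Proposition~4]{HH}, and the ``only if'' direction by reducing to the clique-reduced case, transferring the two combinatorial hypotheses via Propositions~\ref{prop:non-abelian-class} and~\ref{prop:strongly-untransvectable}, invoking \cite[Corollary~9.4]{EH} for the extension-graph isomorphism, and then applying Proposition~\ref{prop:rigidity} (with the cyclic case handled separately). The paper's argument is organized identically, so there is nothing substantive to compare.
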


\begin{proof}
The conclusion is obvious if $G$ is trivial. If $G$ is infinite cyclic, then this is a consequence of Dye's theorem \cite{Dye1,Dye2} stating that all groups $\mathbb{Z}^n$ with $n\ge 1$ are orbit equivalent, and of the fact that an amenable group is never measure equivalent to a non-amenable group, see e.g.\ \cite[Corollary~3.2]{Fur-survey}. We now assume that $G$ is non-cyclic.

The ``if'' direction follows from the facts that $G$ is always measure equivalent to any finite-index subgroup of $G$, and that a graph product of countably infinite amenable groups over $\Lambda$ is always measure equivalent (in fact orbit equivalent) to the right-angled Artin group over $\Lambda$ by \cite[Proposition~4]{HH}.

Conversely, let us assume that $H$ is measure equivalent to $G$. Since $|\Out(G)|<+\infty$, the right-angled Artin group $G$ is clique-reduced. Using Remark~\ref{rk:clique-reduced}, we can write $H$ as a graph product of free abelian groups over a clique-reduced graph. Thus, in view of \cite[Proposition~4]{HH}, we do not lose any generality by assuming that $H$ is also clique-reduced to start with, and our new goal is to prove that $H$ embeds as a finite-index subgroup of $G$. 

By \cite[Corollary~9.4]{EH}, the untransvectable extension graphs $\Gamma_H^{ue}$ and $\Gamma_G^{ue}$ are isomorphic (and since $|\Out(G)|<+\infty$, the latter is the same as $\Gamma_G^e$). Since $|\Out(G)|<+\infty$, the graph $\Gamma_G$ does not contain any untransvectable non-abelian equivalence class, so by Proposition~\ref{prop:non-abelian-class}, neither does $\Gamma_H$. And every untransvectable vertex of $\Gamma_G$ is strongly untransvectable (Remark~\ref{rk:strongly-untransvectable}), so by Proposition~\ref{prop:strongly-untransvectable}, the same is true in $\Gamma_H$. The conclusion therefore follows from Proposition~\ref{prop:rigidity}.
\end{proof}

We finally complete the proof of our main orbit equivalence classification theorem (Theorem~\ref{theointro:oe} from the introduction), which we now recall.

\begin{theo}\label{theo:oe-classification}
Let $G,H$ be two right-angled Artin groups, with $|\Out(G)|<+\infty$.

The groups $G$ and $H$ are orbit equivalent if and only if $H$ is a graph product of infinite finitely generated free abelian groups over the defining graph of $G$.
\end{theo}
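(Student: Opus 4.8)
The plan is to reduce the orbit equivalence classification to the measure equivalence classification just established in Theorem~\ref{theo:me-classification}, using the rigidity provided by \cite[Proposition~5.9]{EH} to rule out proper finite-index subgroups. Since orbit equivalence is stronger than measure equivalence, the ``if'' direction is immediate from the same ingredients as before: by \cite[Proposition~4]{HH} (building on Dye's theorem \cite{Dye1,Dye2}) any graph product of infinite finitely generated free abelian groups over the defining graph $\Gamma_G$ is orbit equivalent to the right-angled Artin group $G$ itself. So the content lies entirely in the converse.

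For the ``only if'' direction, I would first dispose of the degenerate cases where $G$ is trivial or infinite cyclic exactly as in the proof of Theorem~\ref{theo:me-classification}, so assume $G$ is non-cyclic. Suppose $G$ and $H$ are orbit equivalent; then in particular they are measure equivalent, so Theorem~\ref{theo:me-classification} applies and furnishes a finite-index right-angled Artin subgroup $G^0\subseteq G$, with defining graph $\Lambda$, such that $H$ is a graph product of infinite finitely generated free abelian groups over $\Lambda$. By \cite[Proposition~4]{HH}, such a graph product is orbit equivalent to the right-angled Artin group $G_\Lambda=G^0$. Thus $H$ is orbit equivalent to $G^0$. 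Since orbit equivalence is an equivalence relation and $H$ is orbit equivalent to $G$ by hypothesis, we deduce that $G^0$ is orbit equivalent to $G$.

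The key step is now to conclude that $G^0=G$, i.e.\ that $\Lambda=\Gamma_G$. This is where \cite[Proposition~5.9]{EH} enters, as anticipated in the introduction: a right-angled Artin group $G$ with $|\Out(G)|<+\infty$ is never orbit equivalent to any \emph{proper} finite-index subgroup of itself. Since $G^0\subseteq G$ is a finite-index right-angled Artin subgroup orbit equivalent to $G$, this proposition forces $G^0=G$, hence $\Lambda=\Gamma_G$. Therefore $H$ is a graph product of infinite finitely generated free abelian groups over the defining graph of $G$, as required.

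The main obstacle is not in the logical structure, which is a short deduction, but in correctly invoking \cite[Proposition~5.9]{EH}: one must verify that its hypotheses apply to $G^0$ as a finite-index subgroup of $G$, and that orbit equivalence is genuinely transitive in the sense needed (orbit equivalence of countable groups is an equivalence relation). One should also be careful that $G^0$ is itself a right-angled Artin group with finite outer automorphism group, so that it falls within the scope of \cite[Proposition~5.9]{EH}; this follows from \cite[Theorem~5.3]{Hua}, which guarantees that a finite-index right-angled Artin subgroup of a group with $|\Out(G)|<+\infty$ is again transvection-free (and one checks it inherits the relevant finiteness). Once these points are in place, the proof is complete.
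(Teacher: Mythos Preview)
Your argument is correct and follows the same route as the paper. One clarification worth making: \cite[Proposition~5.9]{EH} does not literally assert that $G$ is never orbit equivalent to a proper finite-index subgroup; rather, it says that every self measure equivalence coupling of $G$ (with $|\Out(G)|<+\infty$) has coupling index $1$. The paper's proof supplies the short missing step: from an orbit equivalence between $G$ and $G^0$ one has a coupling of index $1$, and composing it with the tautological coupling given by left/right multiplication on $G$ (which has index $[G:G^0]$) produces a self-coupling of $G$ of index $[G:G^0]$, forcing $[G:G^0]=1$. In particular the proposition is applied to $G$, not to $G^0$, so your closing worry about verifying $|\Out(G^0)|<+\infty$ via \cite[Theorem~5.3]{Hua} is unnecessary.
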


For the proof, recall that the \emph{coupling index} of a measure equivalence coupling $(\Omega,\mu)$ between two countable groups $G$ and $H$ is defined as \[[G:H]_\Omega=\frac{\mu(X_H)}{\mu(X_G)},\] where $X_G,X_H$ are respective Borel fundamental domains for the actions of $G$ and $H$.

\begin{proof}
The ``if'' direction follows from the fact that any graph product over $\Gamma_G$ with countably infinite amenable vertex groups is orbit equivalent to $G$ by \cite[Proposition~4]{HH}.

In view of Theorem~\ref{theo:me-classification}, in order to prove the ``only if'' direction, there only remains to see that $G$ is not orbit equivalent to any proper finite-index subgroup of $G$. So assume that $G$ is orbit equivalent to one of its finite-index subgroups $G^0$, and let us prove that $G^0=G$. We have two measure equivalence couplings bewteen $G$ and $G^0$, one of coupling index $1$ (given by the orbit equivalence), and one of coupling index $[G:G^0]$ (given by the action by left/right multiplication on $G$). By composition of couplings (as in \cite[Section~2]{Fur-me}), we derive a self measure equivalence coupling of $G$ of coupling index $[G:G^0]$. Since $|\Out(G)|<+\infty$, it follows from \cite[Proposition~5.9]{EH} that every self measure equivalence coupling of $G$ has coupling index $1$. So $[G:G^0]=1$, i.e.\ $G^0=G$, as desired.
\end{proof}

\small

\bibliographystyle{alpha}
\bibliography{classification-bib}

\begin{flushleft}
Camille Horbez\\ 
Universit\'e Paris-Saclay, CNRS,  Laboratoire de math\'ematiques d'Orsay, 91405, Orsay, France \\
\emph{e-mail:~}\texttt{camille.horbez@universite-paris-saclay.fr}\\[4mm]
\end{flushleft}

\begin{flushleft}
Jingyin Huang\\
Department of Mathematics\\
The Ohio State University, 100 Math Tower\\
231 W 18th Ave, Columbus, OH 43210, U.S.\\
\emph{e-mail:~}\texttt{huang.929@osu.edu}\\
\end{flushleft}

\vfill

\noindent \textcolor{black!70}{This work is openly licensed via Creative Commons CC-BY 4.0}\\
\href{https://creativecommons.org/licenses/by/4.0/}{https://creativecommons.org/licenses/by/4.0/}.

\end{document}